\newtheorem{theorem}{Theorem}[section]
\newtheorem{lemma}[theorem]{Lemma}
\newtheorem{corollary}[theorem]{Corollary}
\newtheorem{remark}[theorem]{Remark}
\newtheorem{example}[theorem]{Example}
\numberwithin{equation}{section}
\begin{document}

\title[On the quantization for self-affine measures]{On the quantization for self-affine measures on Bedford-McMullen
carpets}
\author{Marc Kesseb\"{o}hmer}
\address{Marc Kesseb\"ohmer, Fachbereich 3 -- Mathematik und Informatik, Universit\"{a}t Bremen, Bibliothekstr. 1, Bremen 28359, Germany}
\email{mhk@math.uni-bremen.de}
\author{Sanguo Zhu}
\address{Sanguo Zhu, School of Mathematics and Physics, Jiangsu University of Technology,
Changzhou 213001, China}
\email[Corresponding author]{sgzhu@jsut.edu.cn}

\begin{abstract}
For a self-affine measure  on a Bedford-McMullen carpet we prove
that its quantization dimension exists and determine its exact value. Further, we give various
sufficient conditions for the corresponding upper and lower quantization
coefficient  to be both positive and finite. Finally, we
compare the quantization dimension with corresponding quantities derived
from the multifractal temperature function and show that -- different
from conformal systems -- they in general do not coincide.
\end{abstract}

\keywords{quantization dimension, quantization coefficient, Bedford-McMullen carpets,
self-affine measures, multifractal formalism.} 
\subjclass{28A75,\;28A80,\;94A15}
 \maketitle

\section{Introduction and statement of results}

The quantization problem for probability measures has its origin in
information theory and engineering technology (cf. \cite{BW:82,GN:98,Za:63}).
Mathematically, the problem of determining the asymptotic error in
the approximation of a given probability measure by discrete probability
measures with finite support in terms of $L_{r}$-metrics
is addressed. We refer to \cite{GL:00} for rigorous mathematical
foundations of quantization theory, Further related results can be
found in \cite{GL:01,GL:04,GL:05,LM:02,Kr:08,PK:01}.

Let $\|\cdot\|$ be a norm on $\mathbb{R}^{q}$ and $d$ the metric
induced by this norm. For each $k\in\mathbb{N}$, we write $\mathcal{D}_{k}:=\{\alpha\subset\mathbb{R}^{q}:1\leq{\rm card}(\alpha)\leq k\}$.
Let $\nu$ be a Borel probability measure on $\mathbb{R}^{q}$. The
$k$\emph{th quantization error} for $\nu$ of order $r$ is defined \cite{GL:00} by
\begin{eqnarray}
e_{k,r}(\nu):=\left\{ \begin{array}{ll}
\inf_{\alpha\in\mathcal{D}_{k}}\big(\int d(x,\alpha)^{r}d\nu(x)\big)^{1/r},\;\;\;\;\;\; r>0,\\
\inf_{\alpha\in\mathcal{D}_{k}}\exp\int\log d(x,\alpha)d\nu(x),\;\;\;\;\; r=0.
\end{array}\right.\label{quanerror}
\end{eqnarray}
A set $\alpha\subset\mathbb{R}^{q}$ is called an $k$-\emph{optimal
set} (of order $r$) for $\nu$ if $1\leq{\rm card}(\alpha)\leq k$
and the infimum in (\ref{quanerror}) is attained at $\alpha$. The
collection of all the $k$-optimal sets of order $r$ is denoted by
$C_{k,r}(\nu)$. According to \cite{GL:04}, under some natural conditions,
$e_{k,r}(\nu)$ tends to $e_{k,0}(\nu)$ as $r$ tends to zero. We
also call $e_{k,0}(\nu)$ the $k$\emph{th geometric mean error}
for $\nu$. So the $k$th geometric mean error is a limiting case
of the $L_{r}$-quantization error $e_{k,r}(\nu)$ when $r\to0$.
To characterize the speed at which the quantization error $e_{k,r}(\nu)$
tends to zero as $k$ increases to infinity, we consider the \emph{upper}
and \emph{lower quantization dimension} for $\nu$ of order $r\in[0,\infty)$
\cite{GL:00,GL:04}:
\begin{eqnarray}
\overline{D}_{r}(\nu):=\limsup_{k\to\infty}\frac{\log k}{-\log e_{k,r}(\nu)},\;\underline{D}_{r}(\nu):=\liminf_{k\to\infty}\frac{\log k}{-\log e_{k,r}(\nu)}.\label{quandimdef}
\end{eqnarray}
If $\overline{D}_{r}(\nu)=\underline{D}_{r}(\nu)$, the common value
is called the \emph{quantization dimension} for $\nu$ of order $r$
and denoted by $D_{r}(\nu)$. Compared with the upper and lower quantization
dimension, the $s$\emph{-dimensional upper} and\emph{ lower quantization
coefficient}
\begin{eqnarray*}
\underline{Q}_{r}^{s}(\nu):=\liminf_{k\to\infty}k^{1/s}e_{k,r}(\nu),\;\;\overline{Q}_{r}^{s}(\nu):=\limsup_{k\to\infty}k^{1/s}e_{k,r}(\nu),\;\; s\in(0,\infty)
\end{eqnarray*}
provide us with some more accurate information on the asymptotic properties
of the quantization error, given that they are both positive and finite.

Now, to introduce self-affine measures on Bedford-McMullen carpets,
fix two integers $m,n$ with $m\leq n$ and fix a set $G\subset\big\{0,1,\ldots,n-1\big\}\times\big\{0,1,\ldots,m-1\big\}$
with $N:=\mbox{card}\left(G\right)\geq2$. We define
a family of affine mappings on $\mathbb{R}^{2}$ by
\begin{equation}
f_{ij}:(x,y)\mapsto\big(n^{-1}x+n^{-1}i,m^{-1}y+m^{-1}j\big),\;\;(i,j)\in G.\label{fi's}
\end{equation}
By a result of Hutchinson \cite{Hut:81} there exists a unique non-empty
compact set $E$ satisfying $E=\bigcup_{(i,j)\in G}^{N}f_{ij}(E)$,
which is a special case of a \emph{self-affine set} called the\emph{
Bedford-McMullen carpet }determined by $(f_{ij})_{(i,j)\in G}$. Furthermore,
for a fixed probability vector $(p_{ij})_{(i,j)\in G}$ with $p_{ij}>0$,
for all $(i,j)\in G$, there exists a unique Borel probability measure
$\mu$ supported on $E$ satisfying
\begin{equation}
\mu=\sum_{(i,j)\in G}p_{ij}\mu\circ f_{ij}^{-1},\label{selfaffinemeas}
\end{equation}
which we call the \emph{self-affine measure} associated with $(p_{ij})_{(i,j)\in G}$
and $(f_{ij})_{(i,j)\in G}$. Sets and measures of this form have
been intensively studied in the past decades, see e.g. \cite{Bed:84,Mcmullen:84,LG:92,Peres:94b,King:95,Fal:10,GLi:10}
for many interesting results. Throughout the paper, $\mu$
will denote such a self-affine measure on a Bedford-McMullen carpet
and we are going to focus on the quantization problem associated to
such measures. Let us set $\theta:=\log m/\log n$ and write
\begin{eqnarray*}
&&G_{x} =\left\{ i:(i,j)\in G\;{\rm for\; some\;}j\right\},\\
&&G_{y}=\left\{ j:(i,j)\in G\;{\rm for\; some\;}i\right\} ,\\
&&G_{x,j} =\left\{ i:(i,j)\in G\right\},\;\;q_{j}:=\sum_{i\in G_{x,j}}p_{ij} .
\end{eqnarray*}
Whenever we consider the geometric mean error, i.e. for $r=0$, due
to some technical reasons, we will additionally assume that
\begin{equation}
\min_{i_{1},i_{2}\in G_{x}}|i_{1}-i_{2}|\geq1,\;\min_{j_{1},j_{2}\in G_{y}}|j_{1}-j_{2}|\geq1.\label{hyposep}
\end{equation}
We are now in the position to state our main result.
\begin{theorem}
\label{mthm1} Let $\mu$ be self-affine measure on a Bedford-McMullen
carpet. Then for each $r\geq0$ we have that $D_{r}(\mu)$ exists
and equals $s_{r}$, where
\begin{eqnarray}
s_{0}:=(-\log m)^{-1}\bigg(\theta\sum_{(i,j)\in G}p_{ij}\log p_{ij}+(1-\theta)\sum_{j\in G_{y}}q_{j}\log q_{j}\bigg)\label{s0}
\end{eqnarray}
and for $r>0$ the number $s_{r}$ is given by the unique solution
of
\begin{eqnarray}
m^{-\frac{rs_{r}}{s_{r}+r}}\bigg(\sum_{(i,j)\in G}p_{ij}^{\frac{s_{r}}{s_{r}+r}}\bigg)^{\theta}\bigg(\sum_{j\in G_{y}}q_{j}^{\frac{s_{r}}{s_{r}+r}}\bigg)^{1-\theta}=1.\label{maineq1}
\end{eqnarray}
Moreover, the quantization coefficients of order $r$ are finite and positive, i.e.,
\begin{eqnarray}
0<\underline{Q}_{r}^{s_{r}}(\mu)\leq\overline{Q}_{r}^{s_{r}}(\mu)<\infty\label{main1}
\end{eqnarray}
if one of the following conditions is fulfilled:

\begin{enumerate}[(a)]
\item  \label{ConditionA} $r>0$ and $C_{j,r}:=q_{j}^{-\frac{s_{r}}{s_{r}+r}}\sum_{i\in G_{x,j}}p_{ij}^{\frac{s_{r}}{s_{r}+r}}$
are identical for all $j\in G_{y}$,
\item  \label{ConditionB} $r=0$ and $C_{j}:=q_{j}^{-1}\sum_{i\in G_{x,j}}p_{ij}\log\left(p_{ij}/q_{j}\right)$
are identical for all $j\in G_{y}$,
\item  \label{ConditionC} $r\geq0$ and $q_{j}$
are identical for all $j\in G_{y}$.

\end{enumerate}\end{theorem}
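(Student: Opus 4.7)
The plan is to reduce the quantization problem for $\mu$ to the analysis of a multiplicatively defined sum over \emph{approximate squares}. I begin with a pressure-type analysis: the function $\phi_r(t):=m^{-rt}\bigl(\sum_{(i,j)\in G}p_{ij}^t\bigr)^\theta\bigl(\sum_{j\in G_y}q_j^t\bigr)^{1-\theta}$ is continuous and strictly decreasing on $[0,1]$, with $\phi_r(0)>1$ and $\phi_r(1)=m^{-r}<1$ for $r>0$, so that \eqref{maineq1} admits a unique root $t_r=s_r/(s_r+r)\in(0,1)$. The formula \eqref{s0} is recovered either by a direct geometric-mean argument in the limit $r\to 0$ or by implicit differentiation of $\log\phi_r(t_r)=0$, using $(t_r-1)/r\to-1/s_0$.

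Next, following the standard Bedford--McMullen construction I index the approximate squares at level $k$ by pairs consisting of a word $((i_1,j_1),\ldots,(i_\ell,j_\ell))\in G^\ell$, where $\ell=\lfloor k\theta\rfloor$, together with an extension $(j_{\ell+1},\ldots,j_k)\in G_y^{k-\ell}$. Each such square $Q$ has diameter comparable to $m^{-k}$ and mass
\[
\mu(Q)\;=\;\prod_{p=1}^\ell p_{i_pj_p}\cdot\prod_{p=\ell+1}^k q_{j_p}.
\]
Summing $\mu(Q)^{t_r}$ factors into the two products appearing in \eqref{maineq1}, yielding the master identity $\Sigma_k(t_r):=\sum_Q\mu(Q)^{t_r}\asymp m^{krt_r}$, which drives the rest of the proof.

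For the upper bound on $e_{N,r}(\mu)$, I choose the level $k=k(N)$ so that $\Sigma_k(t_r)\asymp N$ and distribute $n_Q\sim\mu(Q)^{t_r}N/\Sigma_k(t_r)$ points to each approximate square $Q$, placing them by refining $Q$ into sub-approximate squares at a slightly finer level. The resulting bound $e_{N,r}(\mu)^r\lesssim\sum_Q\mu(Q)\,m^{-kr}\,n_Q^{-r/s_r}$ simplifies because of the algebraic identities $r/s_r=(1-t_r)/t_r$ and $t_r(s_r+r)=s_r$: the powers of $m^k$ cancel exactly and the sum collapses back to $\Sigma_k(t_r)$, leaving $e_{N,r}(\mu)\lesssim N^{-1/s_r}$. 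The geometric-mean case $r=0$ is handled by a parallel argument adapted to the exponential form of the error, with \eqref{hyposep} invoked to keep $\log d(x,\alpha)$ integrable near cylinder boundaries.

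I expect the matching lower bound to be the principal obstacle. Given any $N$-point configuration $\alpha$, I would choose $k^\ast$ so that the number of approximate squares is a fixed multiple of $N$; at least a positive fraction of them lie outside a $c\,m^{-k^\ast}$-neighbourhood of $\alpha$, and each such $Q$ contributes at least $\mu(Q)\,m^{-k^\ast r}$ to $\int d(\cdot,\alpha)^r\,d\mu$. Converting $\sum_{\text{far }Q}\mu(Q)$ into the desired $N^{-r/s_r}$ then requires a H\"older estimate against the master identity. The delicate point is that, unlike in the conformal setting, approximate squares are not exact images of $E$ under the IFS, so the two symbolic lengths $k$ in the $y$-direction and $\lfloor k\theta\rfloor$ in the $x$-direction must be coordinated together with the fibre weights $q_j$, calling for a bespoke two-scale covering lemma. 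Finally, to obtain \eqref{main1} under one of the conditions (a)--(c), I observe that each of them forces the per-fibre quantity $q_j^{-t_r}\sum_{i\in G_{x,j}}p_{ij}^{t_r}$ (or its $r=0$ analogue) to be independent of $j\in G_y$; this promotes the $\asymp$ in the master identity to matching two-sided bounds \emph{uniform in $k$}, which is the regularity needed to conclude $\underline{Q}_r^{s_r}(\mu)>0$ and $\overline{Q}_r^{s_r}(\mu)<\infty$.
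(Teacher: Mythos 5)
Your pressure-function setup, the product formula for $\mu(Q)$ on approximate squares, and the fixed-level identity $\sum_{\sigma\in\Omega_k}\mu_\sigma^{\kappa_r}\asymp m^{kr\kappa_r}$ (with $\kappa_r=s_r/(s_r+r)$) are all correct, and you have correctly located the central difficulty: approximate squares are not images of $E$ under the IFS. But the proposal does not resolve that difficulty, and in two places the strategy cannot work as stated. First, your upper bound $e_{N,r}(\mu)^r\lesssim\sum_Q\mu(Q)\,m^{-kr}\,n_Q^{-r/s_r}$ presupposes that quantizing the conditional measure on an approximate square with $n_Q$ points costs $\lesssim m^{-k}n_Q^{-1/s_r}$; this is the Graf--Luschgy recursion, and it does not close here precisely because the conditional measure on an approximate square is not an affine copy of $\mu$. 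The paper does not attempt this: it imports the two-sided estimate (\ref{key2}) from \cite[Proposition 3.4]{Zhu:09}, which bounds $e_{N_{j,r},r}^r(\mu)$ by $\sum_{\sigma\in\Gamma_{j,r}}\mu_\sigma m^{-|\sigma|r}$ over \emph{stopping-time} antichains $\Gamma_{j,r}$ (where $\mu_\sigma m^{-|\sigma|r}\approx j^{-1}$), not fixed-level families. For stopping-time antichains your master identity degrades: $\sum_{\Gamma_{j,r}}(\mu_\sigma m^{-|\sigma|r})^{\kappa_r}$ is \emph{not} uniformly comparable to $1$ in general --- that is exactly what conditions (a)--(c) are for. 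Indeed, your plan of proving matching two-sided bounds $e_{N,r}\asymp N^{-1/s_r}$ in general would render (a)--(c) superfluous, which is a structural sign that it must fail; the theorem only asserts positivity and finiteness of the coefficients under those extra hypotheses.

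Second, the genuinely new content of the paper --- the \emph{existence} of the limit $D_r(\mu)$, rather than bounds on $\overline D_r$ and $\underline D_r$ --- is obtained by a combinatorial argument absent from your proposal: one shows that $\kappa_r$ is the critical exponent of $\sum_{\sigma\in\Omega^*}(\mu_\sigma m^{-|\sigma|r})^t$ (Lemma \ref{kzlem1}), relates it to the exponential growth rate of the layer counts $\varphi_{k,r}={\rm card}\,\Lambda_{k,r}$ (Lemma \ref{kzlemma2}), and then proves that $k^{-1}\log\varphi_{k,r}$ actually converges via a concatenation/super-additivity argument that carefully coordinates the two incommensurable symbolic lengths $\ell(k_\sigma+k_\omega)$ versus $\ell(k_\sigma)+\ell(k_\omega)$ (Lemma \ref{kzcommon}); this convergence is then fed into the characterization (\ref{kz2}) of $\overline D_r,\underline D_r$ as $\limsup/\liminf$ of the antichain exponents. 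Your ``bespoke two-scale covering lemma'' is a placeholder for this, not a proof. Finally, your reduction of all three conditions to ``the per-fibre quantity $q_j^{-\kappa_r}\sum_{i\in G_{x,j}}p_{ij}^{\kappa_r}$ is independent of $j$'' is incorrect: condition (c) ($q_j$ all equal) does not imply this, and the paper handles (c) by a separate grouping argument (the sets $\Lambda_C(\omega)$ of approximate squares sharing the same cylinder part $\omega_a$, compared against the auxiliary measure $\nu_2$ on cylinders), while (b) and the $r=0$ part of (c) require the entropy-telescoping identities (\ref{san0}) and (\ref{interimclaim}) rather than any statement about sums of powers.
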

\begin{remark}{\rm
We would like to remark that the existence of the quantization dimension
of order zero and its value can be deduced from some general considerations
as follows. As is noted in \cite{GL:04}, the asymptotic geometric
mean error for a Borel probability measure $\nu$ is closely connected
with its \emph{upper }and \emph{lower pointwise dimension}
\begin{eqnarray*}
\overline{\dim}\;\nu(x):=\limsup_{\epsilon\to0}\frac{\log\nu\left(B_{\epsilon}(x)\right)}{\log\epsilon},\;\underline{\dim}\;
\nu(x):=\liminf_{\epsilon\to0}\frac{\log\nu\left(B_{\epsilon}(x)\right)}{\log\epsilon},
\end{eqnarray*}
where $B_{\epsilon}(x)$ denotes the closed ball of radius $\epsilon$
which is centered at $x$ (cf. \cite{Fal:97}). According to \cite[Propsition 3.3]{LG:92},
for $\mu$-a.e. $x$, the upper and lower pointwise dimension of $\mu$
at $x$ coincide and the common value equals $s_{0}$. Thus, by \cite[Corollary 2.1]{zhu:12},
$D_{0}(\mu)$ exists and equals $s_{0}$.

Also for the $L_{r}$-quantization with $r>0$, the second author
has given a characterization for the upper and lower quantization
dimension of $\mu$ in \cite{Zhu:09}. In some special cases, this
characterization leads to the existence of $D_{r}(\mu)$, and in these
situations its value also coincides with $s_{r}$ (cf. Corollary \ref{cor:Permutation}).}
\end{remark}
\begin{remark}
\label{remGL}{\rm Next, we recall some known results on self-similar measures.
For this let $(S_{i})_{i=1}^{N}$ be a set of contracting similitudes
on $\mathbb{R}^{q}$ with contraction ratios $(c_{i})_{i=1}^{N}$
and $(p_{i})_{i=1}^{N}$ a probability vector with $p_{i}>0$ for
all $1\leq i\leq N$. We denote the corresponding (unique compact
non-empty) self-similar set by $E=\bigcup_{i=1}^{N}S_{i}(E)$ and
the self-similar measure supported on $E$ by $\nu=\sum_{i=1}^{N}p_{i}\nu\circ S_{i}^{-1}.$
For $r\in[0,\infty)$, let $k_{r}$ be the positive real number given
by
\begin{eqnarray*}
k_{0}:=\bigg(\sum_{i=1}^{N}p_{i}\log c_{i}\bigg)^{-1}\sum_{i=1}^{N}p_{i}\log p_{i},\quad\sum_{i=1}^{N}(p_{i}c_{i}^{r})^{\frac{k_{r}}{k_{r}+r}}=1,\; r>0.\;\;
\end{eqnarray*}
Assume that $(S_{i})_{i=1}^{N}$ satisfies the open set condition,
namely, there is a non-empty bounded open set $U$ such that $S_{i}(U)\subset U$
for all $1\leq i\leq N$, and $S_{i}(U)\cap S_{j}(U)=\emptyset$ for
all $1\leq i\neq j\leq N$. Then Graf and Luschgy \cite{GL:01} proved
that $D_{r}(\nu)=k_{r}$ and that the $k_r$-dimensional upper and lower quantization coefficient for $\nu$ of order $r$ are both positive and finite.}
\end{remark}
\begin{remark}{\rm
Finally, let us point out that the strict affine situation
differs from the conformal situation significantly. In fact, for self-conformal
measures $\nu$ (as given e.g. in Remark \ref{remGL}) the quantization
dimension can be deduced from the multifractal formalism as follows,
see \cite{GL:01,LM:02}. If $T:\mathbb{R}\to\mathbb{R}$ denotes the
multifractal temperature function for the conformal system, then its Legendre transform $\widehat{T}$ determines the
multifractal $f\left(\alpha\right)$-spectrum of $\nu$, i.e.
\[
f\left(\alpha\right):=\dim_{H}\left(\left\{ x:\overline{\dim}\;\nu(x)=\underline{\dim}\;\nu(x)=\alpha\right\} \right)=\max\left\{ \widehat{T}\left(\alpha\right),0\right\} .
\]
On the other hand, for any $r\geq0$, there is a unique number
$\vartheta_{r}>0$ such that $T\left(\vartheta_{r}\right)=r\vartheta_{r}$
and we have $D_{r}\left(\nu\right)=T\left(\vartheta_{r}\right)/\left(1-\vartheta_{r}\right)$
(cf. \cite[Theorem 1]{LM:02}).

Also for Bedford-McMullen carpets the multifractal problem has been
solved -- see \cite{King:95,J:11}. In this situation the multifractal
temperature function $T$ is given implicitly by
\begin{eqnarray}
m^{-T(t)}\sum_{(i,j)\in G}p_{ij}^{t}q_{j}^{(1-\theta)t}\bigg(\sum_{h\in G_{x,j}}p_{hj}^{t}\bigg)^{\theta-1}=1.\label{k2}
\end{eqnarray}
It has been shown in \cite{King:95,J:11} that $T$ is a smooth convex
function and that its Legendre transform gives the multifractal spectrum
$f(\alpha)$ for the measure $\mu$ as above. As before, for $r\geq0$,
let $q_{r}$ be the unique number satisfying $T(\vartheta_{r})=r\vartheta_{r}$.
Setting $t=\vartheta_{r}$ in (\ref{k2}) we can rewrite this requirement
as
\[
m^{-r\vartheta_{r}}\sum_{(i,j)\in G}p_{ij}^{\vartheta_{r}}\bigg(q_{j}^{-\vartheta_{r}}\sum_{h\in G_{x,j}}p_{hj}^{\vartheta_{r}}\bigg)^{\theta-1}=1.
\]
So one can see that $T\left(\vartheta_{r}\right)/\left(1-\vartheta_{r}\right)$
coincides with the number $t_{r}$ given by
\begin{eqnarray}
m^{-r\frac{t_{r}}{t_{r}+r}}\sum_{j\in G_{y}}q_{j}^{(1-\theta)\frac{t_{r}}{t_{r}+r}}\bigg(\sum_{i\in G_{x,j}}p_{ij}^{\frac{t_{r}}{t_{r}+r}}\bigg)^{\theta} & = & 1.\label{maineq2}
\end{eqnarray}
Hence, by H\"{o}lder's inequality, $t_{r}\leq s_{r}$ and equality holds
if and only if $C_{j,r}$ coincide for all $j\in G_{y}$. Thus, by
Theorem \ref{mthm1}, this allows for the strict inequality
\[
T(\vartheta_{r})(1-\vartheta_{r})^{-1}=t_{r}<s_{r}=D_{r}(\mu).
\]

}\end{remark}

\section{Preliminaries}

As in \cite{Zhu:09}, to avoid degenerate cases, we always assume
\begin{equation}
m<n,\;{\rm card}\left(G_{x}\right),{\rm card}\left(G_{y}\right)\geq2.\label{hypo1}
\end{equation}
For $x\in\mathbb{R}$ let $[x]$ denote the largest integer not exceeding
$x$. For each $k\in\mathbb{N}$, set
\[
\ell(k):=[k\theta];\;\;\Omega_{k}:=G^{\ell(k)}\times G_{y}^{k-\ell(k)},\;\Omega^{*}:=\bigcup_{k\geq1}\Omega_{k}.
\]
Let $E_{0}:=[0,1]^{2}$. For $\sigma:=\left((i_{1},j_{1}),\ldots,(i_{k},j_{k})\right)\in G^{k}$.
Define
\begin{eqnarray*}
|\sigma|=k,\; E_{\sigma}:=f_{i_{1}j_{1}}\circ\ldots\circ f_{i_{k}j_{k}}(E_{0}),\;\;\mu_{\sigma}:=\mu\left(E_{\sigma}\right)=\prod_{h=1}^{k}p_{i_{h}j_{h}}.
\end{eqnarray*}
We call $E_{\sigma}$ a \emph{cylinder of order} $k$. For the above
$\sigma\in G^{k}$, we write
\[
\sigma^{-}:=((i_{1},j_{1}),\ldots,(i_{k-1},j_{k-1})).
\]
To each
\begin{equation}
\sigma=\big((i_{1},j_{1}),\ldots,(i_{\ell(k)},j_{\ell(k)}),j_{\ell(k)+1},\ldots,j_{k}\big)\in\Omega^{*},\label{sg1}
\end{equation}
there corresponds a unique rectangle, called an \emph{approximate
square of order} $k$:
\[
F_{\sigma}:=\bigg[\frac{p}{n^{\ell(k)}},\frac{p+1}{n^{\ell(k)}}\bigg]\times\bigg[\frac{q}{m^{k}},\frac{q+1}{m^{k}}\bigg],
\]
where $p:=\sum_{h=1}^{\ell(k)}i_{h}n^{\ell(k)-h}$, $q:=\sum_{h=1}^{k}j_{h}m^{k-h}$.
For $\sigma\in\Omega^{*}$ in (\ref{sg1}), we define
\begin{eqnarray*}
|\sigma|:=k,\;\;\mu_{\sigma}:=\mu\left(F_{\sigma}\right)=\prod_{h=1}^{\ell(k)}p_{i_{h}j_{h}}\prod_{h=\ell(k)}^{k}q_{j_{h}},\\
\sigma_{a}:=\left((i_{1},j_{1}),\ldots,(i_{\ell(k)},j_{\ell(k)})\right),\;\sigma_{b}:=\left(j_{\ell(k)+1},\ldots,j_{k}\right).
\end{eqnarray*}
and write $\sigma_{a}\prec\sigma$.
Let $|A|$ denote the diameter of a set $A\subset\mathbb{R}^{2}$.
One easily sees
\begin{equation}
m^{-|\sigma|}\leq|F_{\sigma}|\leq m^{-|\sigma|}\sqrt{n^{2}+1}=:\delta m^{-|\sigma|}\;\;{\rm with}\;\;\delta:=\sqrt{n^{2}+1}.\label{diameter}
\end{equation}
Let $\sigma,\tau\in\Omega^{*}$. We write $\sigma\prec\tau$ if $F_{\tau}\subset F_{\sigma}$;
and
\begin{eqnarray*}
\sigma=\tau^{\flat}\;\;{\rm if}\;\;\sigma\prec\tau\;\;{\rm and}\;\;|\tau|=|\sigma|+1.
\end{eqnarray*}
Thus, for the word $\sigma$ in (\ref{sg1}), $\sigma^{\flat}$ takes
the following two possible forms:
\[
\left\{ \begin{array}{ll}
\big((i_{1},j_{1}),\ldots,(i_{\ell(k)},j_{\ell(k)}),j_{\ell(k)+1},\ldots,j_{k-1}\big),\;\;\;\;\ell(k)=\ell(k-1),\\
\big((i_{1},j_{1}),\ldots,(i_{\ell(k)-1},j_{\ell(k)-1}),j_{\ell(k)},\ldots,j_{k-1}\big),\;\ell(k)=\ell(k-1)+1
\end{array}\right..
\]
We say that $\sigma,\tau\in\Omega^{*}$ are incomparable if neither
$\sigma\prec\tau$ nor $\tau\prec\sigma$. A finite set $\Gamma\subset\Omega^{*}$
is called a \emph{finite antichain} if any two words $\sigma,\tau\in\Gamma$
are incomparable; a finite antichain $\Gamma$ is called maximal if
$E\subset\bigcup_{\sigma\in\Gamma}F_{\sigma}$.

\subsection{On the $L_{r}$-quantization}

For $r>0$, we set $\underline{\eta}_{1,r}:=\min_{\sigma\in\Omega_{1}}\mu_{\sigma}m^{-r}$ and
\[
R_{1,r}:=\min_{(i,j)\in G}\min_{k\in G_{y}}p_{ij}q_{k}m^{-r},\;\;\underline{\eta}_{r}:=\min\left\{ R_{1,r},\underline{\eta}_{1,r}\right\} .
\]
We will need the finite maximal antichains as defined below:
\begin{equation}
\Gamma_{j,r}:=\{\sigma\in\Omega^{*}:\mu_{\sigma^{\flat}}m^{-|\sigma^{\flat}|r}\geq j^{-1}\underline{\eta}_{r}>\mu_{\sigma}m^{-|\sigma|r}\},\; j\in\mathbb{N}.\label{gammaj}
\end{equation}
Clearly, $\bigcup_{\sigma\in\Gamma_{j,r}}F_{\sigma}\supset E$ and
the interiors of $F_{\sigma},\sigma\in\Gamma_{j,r}$ are pairwise
disjoint. Set
\begin{eqnarray}
l_{1j}:=\min_{\sigma\in\Gamma_{j,r}}|\sigma|,\; l_{2j}:=\max_{\sigma\in\Gamma_{j,r}}|\sigma|;\;\;\overline{\eta}_{1,r}:=\max_{\sigma\in\Omega_{1}}\mu_{\sigma}m^{-r},\nonumber \\
R_{2,r}:=\max_{(i,j)\in G}p_{ij}q_{j}^{-1}m^{-r},\;\overline{\eta}_{r}:=\max\{\overline{\eta}_{1,r},\widetilde{R}_{2}\}.\label{kz8}
\end{eqnarray}

\begin{lemma}
There exist two constants $A_{1},A_{2}>0$ such that
\begin{equation}
A_{1}\log j\leq l_{1j}\leq l_{2j}\leq A_{2}\log j\;\;{\rm for\; large}\;\; j\in\mathbb{N}.\label{kj}
\end{equation}
 \end{lemma}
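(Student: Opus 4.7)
The plan is to sandwich both $\mu_{\sigma}$ and $\mu_{\sigma^{\flat}}$ between explicit exponential functions of $|\sigma|$, and then to substitute these bounds into the two defining inequalities of $\Gamma_{j,r}$ to extract matching logarithmic bounds on the word length.

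First, I would set $a:=\min_{(i,j)\in G}p_{ij}$ and $b:=\max\bigl\{\max_{(i,j)\in G}p_{ij},\,\max_{j\in G_{y}}q_{j}\bigr\}$. Because $N\geq 2$ and $\mathrm{card}(G_{y})\geq 2$, no single $p_{ij}$ and no single $q_{j}$ can equal $1$, so $0<a\leq b<1$. For any $\sigma\in\Omega^{*}$ with $|\sigma|=k$, the product formula $\mu_{\sigma}=\prod_{h=1}^{\ell(k)}p_{i_{h}j_{h}}\prod_{h=\ell(k)+1}^{k}q_{j_{h}}$ instantly yields $a^{k}\leq\mu_{\sigma}\leq b^{k}$, and analogously $a^{k-1}\leq\mu_{\sigma^{\flat}}\leq b^{k-1}$ (with the empty-product convention covering $|\sigma|=1$).

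Second, fix $\sigma\in\Gamma_{j,r}$ and put $k:=|\sigma|$. The right-hand inequality in \eqref{gammaj} gives $\mu_{\sigma}m^{-kr}<j^{-1}\underline{\eta}_{r}$, which, combined with $\mu_{\sigma}\geq a^{k}$, forces $(am^{-r})^{k}<j^{-1}\underline{\eta}_{r}$. Since $am^{-r}<1$, taking logarithms yields
\[
k\;>\;\frac{\log j-\log\underline{\eta}_{r}}{\bigl|\log(am^{-r})\bigr|},
\]
which delivers the lower bound $l_{1j}\geq A_{1}\log j$ for large $j$. Symmetrically, the left-hand inequality $\mu_{\sigma^{\flat}}m^{-(k-1)r}\geq j^{-1}\underline{\eta}_{r}$ together with $\mu_{\sigma^{\flat}}\leq b^{k-1}$ produces $(bm^{-r})^{k-1}\geq j^{-1}\underline{\eta}_{r}$, so
\[
k\;\leq\;1+\frac{\log j-\log\underline{\eta}_{r}}{\bigl|\log(bm^{-r})\bigr|},
\]
giving $l_{2j}\leq A_{2}\log j$ for large $j$.

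I do not expect any real obstacle: the only points worth checking are that $am^{-r}$ and $bm^{-r}$ are strictly less than $1$ (immediate from $a,b<1$ and $m\geq 2$) and that the boundary case $|\sigma|=1$, where $\sigma^{\flat}$ is the empty word, is handled consistently via $\mu_{\sigma^{\flat}}=1=b^{0}$. The constants $A_{1},A_{2}>0$ depend only on $a$, $b$, $m$ and $r$, and may be taken as any numbers slightly less than $1/\bigl|\log(am^{-r})\bigr|$ and slightly greater than $1/\bigl|\log(bm^{-r})\bigr|$ respectively, with the error terms absorbed for $j$ sufficiently large.
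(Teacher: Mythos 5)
Your proof is correct and takes essentially the same route as the paper: both arguments sandwich $\mu_{\sigma}m^{-|\sigma|r}$ between two geometric sequences in $|\sigma|$ and then substitute these bounds into the two defining inequalities of $\Gamma_{j,r}$ to solve for the word length. The only (immaterial) difference is that you bound the individual factors of the product $\mu_{\sigma}$ by $a$ and $b$, whereas the paper iterates bounds on the one-step ratios $\mu_{\sigma}m^{-|\sigma|r}\big/\mu_{\sigma^{\flat}}m^{-|\sigma^{\flat}|r}$ via $\underline{\eta}_{r}$ and $\overline{\eta}_{r}$.
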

\begin{proof}
Note that there are two words $\sigma^{(i)}\in\Gamma_{j,r}\cap\Omega_{l_{ij}},i=1,2$.
By (\ref{gammaj}),
\[
\underline{\eta}_{r}^{l_{1j}}\leq\mu_{\sigma}m^{-|\sigma|r}<j^{-1}\underline{\eta}_{r}<j^{-1},\;\;
\overline{\eta}_{r}^{l_{2j}-1}\geq\mu_{\tau^{\flat}}m^{-|\tau^{\flat}|r}\geq j^{-1}\underline{\eta}_{r}.
\]
Hence, it suffices to set $A_{1}:=(-\log\underline{\eta}_{r})^{-1}$
and $A_{2}:=2(-\log\overline{\eta}_{r}^{-1})$.
\end{proof}
For every $j\in\mathbb{N}$, let $t_{j,r}$ be the unique positive
real number such that
\[
\sum_{\sigma\in\Gamma_{j,r}}(\mu_{\sigma}m^{-|\sigma|r})^{\frac{t_{j,r}}{t_{j,r}+r}}=1.
\]
Let $N_{j,r}:={\rm card}(\Gamma_{j,r})$. By (\ref{gammaj}) and
\cite[(3.2)]{Zhu:09}, we see
\begin{equation}
(j\underline{\eta}_{r}^{-1})^{\frac{t_{j,r}}{t_{j,r}+r}}\leq N_{j,r}\leq(jR_{1,r}^{-1}\underline{\eta}_{r}^{-1})^{\frac{t_{j,r}}{t_{j,r}+r}}\leq(j\underline{\eta}_{r}^{-2})^{\frac{t_{j,r}}{t_{j,r}+r}}.\label{cardgammaj}
\end{equation}
As is shown in the proof of \cite[Proposition 3.4]{Zhu:09}, we have
\begin{eqnarray}
\overline{D}_{r}(\mu)=\limsup_{j\to\infty}t_{j,r},\;\;\underline{D}_{r}(\mu)=\liminf_{j\to\infty}t_{j,r}.\label{kz2}
\end{eqnarray}
More exactly, there is a constant $D>0$, which is independent
of $j$, such that
\begin{eqnarray}
D\sum_{\sigma\in\Gamma_{j,r}}\mu_{\sigma}m^{-|\sigma|r}\leq e_{N_{j,r},r}^{r}(\mu)\leq\sum_{\sigma\in\Gamma_{j,r}}\mu_{\sigma}m^{-|\sigma|r}.\label{key2}
\end{eqnarray}

\begin{remark}{\rm
The first part of the proof of Proposition 3.4 of \cite{Zhu:09} is
to choose, for each $\sigma\in\Gamma_{j,r}$, a word $\widetilde{\sigma}\in\Omega^{*}$
such that $F_{\widetilde{\sigma}}\subset F_{\sigma}$ and
\begin{equation}
d(F_{\widetilde{\tau}},F_{\widetilde{\sigma}})\geq\beta\max\{|F_{\widetilde{\sigma}}|,|F_{\widetilde{\tau}}|\}.\label{add01}
\end{equation}
for some constant $\beta>0$ and every pair $\sigma,\tau\in\Gamma_{j,r}$
with $\sigma\neq\tau$. In fact, this can be seen by a more straightforward
argument. Let $H_{1},H_{2}\in\mathbb{N}$ satisfy
\[
\ell(k+H_{1})=\ell(k),\;\;\ell(k+H_{2})=\ell(k)+H_{2}.
\]
Then, by the definition of $\ell(k),k\in\mathbb{N}$, we have
\begin{eqnarray*}
(k+H_{1})\theta-1<k\theta,\;(k+H_{2})\theta\geq k\theta-1+H_{2}.
\end{eqnarray*}
Hence, $H_{1}<\theta^{-1}$ and $H_{2}\leq(1-\theta)^{-1}$. Let
\begin{equation}
\sigma=((i_{1},j_{1}),\ldots,(i_{l},j_{l}),j_{l+1},\ldots,j_{k})\in\Omega^{*}.\label{pre8}
\end{equation}
By (\ref{hypo1}), we can choose an approximate square $F_{\widetilde{\sigma}}$
of order $k+2(H_{1}+H_{2})$ such that $d(F_{\widetilde{\sigma}},F_{\sigma}^{c})\geq(n^{2}+1)^{-\frac{1}{2}}|F_{\widetilde{\sigma}}|$.
Hence, it suffices to set $\beta:=\delta^{-1}=(n^{2}+1)^{-\frac{1}{2}}$.}
\end{remark}

\subsection{On the geometric mean error}

For $k\in\mathbb{N}$, we simply write $C_{k}(\mu)$ for $C_{k,0}(\mu)$.
We will consider $\hat{e}_{k}(\mu):=\log e_{k,0}(\mu)$ instead of
$e_{k,0}(\mu)$ for convenience (cf. \cite{GL:04}). Set
\begin{eqnarray*}
\underline{q} & := & \min_{j\in G_{y}}q_{j};\;\;\underline{R}:=\min_{(i,j)\in G}p_{ij}\underline{q},\;\;\eta_{0}:=\min\{\underline{R},\min_{\sigma\in\Omega_{1}}\mu_{\sigma}\}.
\end{eqnarray*}
For every $j\geq1$ and $k\geq1$, we define
\begin{eqnarray}
\Lambda_{j} & := & \{\sigma\in\Omega^{*}:\mu_{\sigma^{\flat}}\geq j^{-1}\eta_{0}>\mu_{\sigma}\};\; k_{1j}:=\min_{\sigma\in\Lambda_{j}}|\sigma|,\; k_{2j}:=\max_{\sigma\in\Lambda_{j}}|\sigma|;\nonumber \\
t_{j} & := & \frac{\sum_{\sigma\in\Lambda_{j}}\mu_{\sigma}\log\mu_{\sigma}}{\sum_{\sigma\in\Lambda_{j}}\mu_{\sigma}\log m^{-|\sigma|}},\;\; s_{k,0}:=\frac{\sum_{\sigma\in\Omega_{k}}\mu_{\sigma}\log\mu_{\sigma}}{\sum_{\sigma\in\Omega_{k}}\mu_{\sigma}\log m^{-k}};\nonumber \\
Q_{j} & := & \psi_{j}^{\frac{1}{s_{0}}}e_{\psi_{j}}(\mu),\;\underline{Q}_{\flat}^{s_{0}}(\mu):=\liminf_{j\to\infty}Q_{j},\;\;\overline{Q}_{\flat}^{s_{0}}(\mu):=\limsup_{j\to\infty}Q_{j}.\label{kz9}
\end{eqnarray}
As we did in the proof for \cite[Lemma 4.1(b)]{Zhu:09}, it is not
difficult to show
\begin{lemma}
\label{rem1}With the above notations, we have

\begin{enumerate}[(A)]

\item $\left[j\eta_{0}^{-1}\right]\leq\psi_{j}:={\rm card}(\Lambda_{j})\leq\left[j\eta_{0}^{-2}\right]$;

\item  \label{lem:BulletB}there are constants $C_{i}>0,1\leq i\leq4$
such that
\begin{eqnarray*}
C_{1}\leq t_{j}\leq C_{2},\;\; C_{3}\log j\leq k_{1j}\leq k_{2j}\leq C_{4}\log j;
\end{eqnarray*}

\item  \label{lem:BulletC}$\underline{Q}_{0}^{s_{0}}(\mu)>0$ iff
$\underline{Q}_{\flat}^{s_{0}}(\mu)>0$ and $\overline{Q}_{0}^{s_{0}}(\mu)<\infty$
iff $\overline{Q}_{\flat}^{s_{0}}(\mu)<\infty$.

\end{enumerate}
\end{lemma}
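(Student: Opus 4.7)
The plan is to establish (A), (B), (C) in order, each relying on the preceding. For (A), the key observation is that the constant $\eta_0$ has been chosen precisely so that $\mu_\sigma \geq \eta_0\,\mu_{\sigma^\flat}$ holds uniformly for every $\sigma \in \Omega^*$. One checks this by a direct case analysis: if $\ell(|\sigma|) = \ell(|\sigma|-1)$ the added factor is some $q_{j_k} \geq \underline{q}$, while if $\ell(|\sigma|) = \ell(|\sigma|-1)+1$, after tracking the re-indexing between the two words, the added factor equals $(p_{ij}/q_j)\,q_{j_k}$ and is bounded below by $p_{ij}\,\underline{q} \geq \underline{R}$ (using $q_j \leq 1$). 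In both cases the factor is at least $\eta_0$. Combined with the defining inequalities of $\Lambda_j$, this yields $j^{-1}\eta_0^2 \leq \mu_\sigma < j^{-1}\eta_0$ for every $\sigma \in \Lambda_j$. Since $\Lambda_j$ is a maximal antichain, the family $\{F_\sigma\}_{\sigma \in \Lambda_j}$ covers $E$ with pairwise disjoint interiors, so $\sum_{\sigma \in \Lambda_j}\mu_\sigma = 1$, and summing the two-sided bound on $\mu_\sigma$ produces exactly $[j\eta_0^{-1}] \leq \psi_j \leq [j\eta_0^{-2}]$.

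For (B), I use the same uniform two-sided bound. Set $p_{\min} := \min\{\min_{(i,j)\in G} p_{ij},\,\underline q\}$ and $p_{\max} := \max\{\max_{(i,j)\in G} p_{ij},\,\max_{j\in G_y} q_j\} < 1$ (strict because $|G|,|G_y|\geq 2$ by \eqref{hypo1}). Then $p_{\min}^{|\sigma|} \leq \mu_\sigma$ and $\mu_{\sigma^\flat} \leq p_{\max}^{|\sigma^\flat|}$, which together with $\mu_\sigma < j^{-1}\eta_0 \leq \mu_{\sigma^\flat}$ give matching logarithmic bounds on $|\sigma|$, hence $C_3\log j \leq k_{1j} \leq k_{2j} \leq C_4\log j$. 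For the bound on $t_j$, the two-sided bound on $\mu_\sigma$ yields $\log\mu_\sigma = -\log j + O(1)$ uniformly on $\Lambda_j$; combined with $\sum_{\sigma\in\Lambda_j}\mu_\sigma = 1$, the numerator of $t_j$ equals $-\log j + O(1)$, and the denominator $-(\log m)\sum_{\sigma\in\Lambda_j}\mu_\sigma |\sigma|$ is of order $\log j$ by the bound on $k_{1j},k_{2j}$. The ratio is thus pinched between two positive constants $C_1,C_2$.

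For (C), the crucial consequence of (A) is that the ratios $\psi_{j+1}/\psi_j$ stay within a compact sub-interval $[c, C] \subset (0,\infty)$ for all sufficiently large $j$. Given any large $k\in\mathbb{N}$, choose $j$ with $\psi_j \leq k < \psi_{j+1}$; monotonicity of $k\mapsto e_{k,0}(\mu)$ then gives the sandwich
\[
c^{1/s_0}\, Q_{j+1}\;\leq\;k^{1/s_0}\, e_{k,0}(\mu)\;\leq\;C^{1/s_0}\, Q_j.
\]
Passing to $\liminf$ and $\limsup$ yields $c^{1/s_0}\underline Q_\flat^{s_0}(\mu) \leq \underline Q_0^{s_0}(\mu)$ and $\overline Q_0^{s_0}(\mu) \leq C^{1/s_0}\overline Q_\flat^{s_0}(\mu)$, while the reverse inequalities $\underline Q_0^{s_0}(\mu) \leq \underline Q_\flat^{s_0}(\mu)$ and $\overline Q_\flat^{s_0}(\mu) \leq \overline Q_0^{s_0}(\mu)$ are immediate since $(Q_j)_j$ is a subsequence of $(k^{1/s_0}e_{k,0}(\mu))_k$. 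Both announced equivalences follow at once.

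The main technical subtlety I anticipate is the verification of $\mu_\sigma/\mu_{\sigma^\flat} \geq \eta_0$ in the mixed case $\ell(|\sigma|) = \ell(|\sigma|-1)+1$, where the alignment of cylinder words with approximate-square words requires careful bookkeeping of which $q_{j}$-factor is removed and which is added. Once this combinatorial step is settled, the remaining estimates amount to a direct translation of the $L_r$-arguments in \cite{Zhu:09} to the geometric mean setting.
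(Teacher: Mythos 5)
Your proof is correct and follows essentially the route the paper intends (the paper itself only defers to the proof of Lemma~4.1(b) in \cite{Zhu:09}): the uniform bound $\mu_{\sigma}\geq\eta_{0}\mu_{\sigma^{\flat}}$ obtained from (\ref{measratio}) pins $\mu_{\sigma}$ in $[j^{-1}\eta_{0}^{2},j^{-1}\eta_{0})$ on the maximal antichain $\Lambda_{j}$, which gives (A), then (B) by the elementary $p_{\min}^{|\sigma|}\leq\mu_{\sigma}$, $\mu_{\sigma^{\flat}}\leq p_{\max}^{|\sigma^{\flat}|}$ estimates, and (C) by the standard sandwich argument using monotonicity of $k\mapsto e_{k,0}(\mu)$ together with the boundedness of $\psi_{j+1}/\psi_{j}$. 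The one step worth making explicit is that every $k\geq\psi_{1}$ indeed lies in some interval $[\psi_{j},\psi_{j+1})$, which follows since $\Lambda_{j+1}$ refines $\Lambda_{j}$ (so $\psi_{j}$ is nondecreasing) and $\psi_{j}\to\infty$ by (A); this is implicit in your write-up and causes no gap.
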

For every $j\in G_{y}$, we define a contractive mapping $g_{j}$
by
\[
g_{j}(x,y):=(x,m^{-1}y)+(0,m^{-1}j),\;\;(x,y)\in\mathbb{R}^{2}.
\]
For $\sigma\in\Omega_{k}$ of the form (\ref{pre8}), we consider
a mapping $h_{\sigma}$ on $\mathbb{R}^2$:
\begin{eqnarray*}
h_{\sigma}(x)=f_{i_{1}j_{1}}\circ f_{i_{2}j_{2}}\ldots\circ f_{i_{\ell(k)}j_{\ell(k)}}\circ g_{j_{\ell(k)+1}}\ldots\circ g_{j_{k}}(x).
\end{eqnarray*}
Clearly, $h_{\sigma}$ is a Borel bijection satisfying $h_{\sigma}(E_{0})=F_{\sigma}$
and we have
\begin{eqnarray}
m^{-|\sigma|}d(x,y)\leq d\big(h_{\sigma}(x),h_{\sigma}(y)\big)\leq nm^{-|\sigma|}d(x,y),\;\; x,y\in\mathbb{R}^{2}.\label{pre7}
\end{eqnarray}
For every $\sigma\in\Omega^{*}$, we have a probability measure $\nu_{\sigma}:=\mu(\cdot|F_{\sigma})\circ h_{\sigma}$.
This measure is supported on $h_{\sigma}^{-1}(F_{\sigma}\cap E)\subset E_{0}$.
\begin{lemma}
 There exist constants $C>0,t>0$ such that, for all $\epsilon>0$,
\begin{equation}
\sup_{\sigma\in\Omega^{*}}\sup_{x\in\mathbb{R}^{2}}\nu_{\sigma}(B_{\epsilon}(x))\leq C\epsilon^{t}.\label{pre5}
\end{equation}
\end{lemma}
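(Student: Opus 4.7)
My plan is to reduce the bound on $\nu_\sigma(B_\epsilon(x))$ to a uniform estimate for the ratios $\mu(F_\tau)/\mu(F_\sigma)$ between nested approximate squares whose orders differ by an amount controlled by $\epsilon$. Since $\nu_\sigma(A)=\mu(h_\sigma(A)\cap F_\sigma)/\mu(F_\sigma)$ and the two individual contraction ratios of $h_\sigma$ underlying (\ref{pre7}) are $n^{-\ell(k)}$ in the $x$-direction and $m^{-k}$ in the $y$-direction, the image $h_\sigma(B_\epsilon(x))$ is contained in a rectangle of width at most $2\epsilon n^{-\ell(k)}$ and height at most $2\epsilon m^{-k}$. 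The case $\epsilon\geq 1$ is trivial, so I will assume $\epsilon<1$ and set $j:=\lceil-\log\epsilon/\log m\rceil\geq 1$.

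Approximate squares of order $k+j$ have $y$-side $m^{-(k+j)}$ and $x$-side $n^{-\ell(k+j)}\in[m^{-(k+j)},nm^{-(k+j)}]$, where the latter uses $\ell(k+j)\in[(k+j)\theta-1,(k+j)\theta]$. A direct geometric count, uniform in $\sigma$, shows that $h_\sigma(B_\epsilon(x))$ meets at most a constant $K=K(m,n)$ of these approximate squares. It therefore suffices to bound $\mu(F_\tau)/\mu(F_\sigma)$ uniformly over all $\tau\succ\sigma$ with $|\tau|=k+j$.

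I split this into two cases depending on whether $\ell(k+j)\geq k$ or $\ell(k+j)<k$. Expanding the product formulas for $\mu_\tau$ and $\mu_\sigma$ and cancelling the positions on which they coincide (the initial $(i,j)$-block in both cases, plus a trailing $y$-only block in the second case) leaves a product of factors of three types: $p_{i'_h j_h}/q_{j_h}\leq 1$ at ``upgraded'' positions, $p_{i'_h j'_h}$ at genuinely new $(i,j)$-positions, and $q_{j'_h}$ at genuinely new $y$-only positions. Setting $p^*:=\max_{(i,j)\in G}p_{ij}$ and $q^*:=\max_{j\in G_y}q_j$, the identity $q_j=\sum_i p_{ij}$ gives $p^*\leq q^*$, and (\ref{hypo1}) ensures $q^*<1$. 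In both cases a short count then collapses the product to $\mu(F_\tau)/\mu(F_\sigma)\leq(q^*)^j$. Summing over the $K$ covering squares and translating $(q^*)^j$ into $\epsilon$-language yields the lemma with $t:=-\log q^*/\log m>0$ and a suitable constant $C$.

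The main technical point is that the ``upgrade'' factors $p_{i'_h j_h}/q_{j_h}$ may equal $1$ exactly when the column $j_h$ in $G$ is a singleton, so no contraction can be extracted from them; the exponent $t$ must therefore come entirely from the $q_{j'_h}$-factors at genuinely new $y$-positions. Verifying that in both regimes --- shallow $j$ corresponding to $\ell(k+j)<k$ and deep $j$ corresponding to $\ell(k+j)\geq k$ --- at least $j$ such ``good'' factors survive the cancellations is what makes $t$ independent of $\sigma$ and of $k$.
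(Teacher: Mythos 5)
Your argument is correct, but it takes a genuinely different route from the paper's. The paper pulls everything back to the unit square: for $\tau\in\Lambda(\sigma,h)$ it shows via (\ref{pre7}) that the preimages $h_{\sigma}^{-1}(F_{\tau})$ all have diameter comparable to $m^{-h}$, uses the separation hypothesis (\ref{hyposep}) to show these preimages are mutually separated by a fixed multiple of that diameter, bounds each of their $\nu_{\sigma}$-masses by $\overline{q}^{\,h}$ via (\ref{measratio}), and then invokes Hutchinson's covering argument together with Lemma 12.3 of \cite{GL:00} to pass from $\epsilon\in(0,m^{-1})$ to all $\epsilon>0$. You instead push the ball forward: $h_{\sigma}(B_{\epsilon}(x))$ lies in a rectangle of width $O(\epsilon n^{-\ell(k)})$ and height $O(\epsilon m^{-k})$, and a direct count against the grid of mesh $n^{-\ell(k+j)}\times m^{-(k+j)}$ (using $n^{\theta}=m$ and $\ell(k+j)-\ell(k)\leq j\theta+1$) shows it meets at most a constant $K(m,n)$ of the order-$(k+j)$ approximate squares below $\sigma$; your ratio bound $\mu_{\tau}/\mu_{\sigma}\leq(q^{*})^{j}$ is exactly the paper's $\overline{q}^{\,h}$-estimate, and it yields the same exponent $t=-\log\overline{q}/\log m$. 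What your approach buys is self-containedness (no appeal to Hutchinson or to \cite{GL:00}) and, more substantively, independence from the separation hypothesis (\ref{hyposep}), which the paper's proof genuinely needs in order to separate the preimages $h_{\sigma}^{-1}(F_{\tau})$; so your argument in fact establishes the lemma in slightly greater generality. Two small points to tighten in a written version: first, for a general norm on $\mathbb{R}^{2}$ the rectangle containment holds only up to a norm-equivalence constant, which should be absorbed into $K$; second, your closing sentence slightly misattributes the source of the decay --- in the regime $\ell(k+j)\geq k$ the $j$ contracting factors come not only from new $y$-only positions but also from the new full positions $k<h\leq\ell(k+j)$, each contributing $p_{i'_{h}j'_{h}}\leq q_{j'_{h}}\leq q^{*}$, which is precisely your observation $p^{*}\leq q^{*}$, so nothing is actually missing.
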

\begin{proof}
Let $k\geq1$ and $\sigma\in\Omega_{k}$ be fixed. We define
\begin{equation}
\Lambda(\sigma,h):=\{\tau\in\Omega_{k+h}:\sigma\prec\tau\},\;\; h\in\mathbb{N}.\label{san11}
\end{equation}
For $\tau,\rho\in\Lambda(\sigma,h)$, $|F_{\tau}|=|F_{\rho}|$ and
$|h_{\sigma}^{-1}(F_{\tau})|=|h_{\sigma}^{-1}(F_{\rho})|=:d_{h}(\sigma)$.
By (\ref{pre7}),
\begin{eqnarray}
n^{-1}m^{-h}\leq n^{-1}m^{k}|F_{\tau}|\leq d_{h}(\sigma)\leq m^{k}|F_{\tau}|\leq\delta m^{-h},\label{pre80}
\end{eqnarray}
where $\delta=\sqrt{n^{2}+1}$ as before. By (\ref{hyposep}),
for $x\in F_{\tau},y\in F_{\rho}$ with $\tau\neq\rho$, we have
\begin{eqnarray*}
d(x,y)\geq d(F_{\tau},F_{\rho})\geq\delta^{-1}|F_{\tau}|,
\end{eqnarray*}
This and (\ref{pre7}) implies that
\begin{equation}
d(h_{\sigma}^{-1}(F_{\tau}),h_{\sigma}^{-1}(F_{\rho}))\geq n^{-1}m^{k}\delta^{-1}|F_{\tau}|\geq n^{-1}\delta^{-1}|h_{\sigma}^{-1}(F_{\tau})|.
\end{equation}
As we see in \cite[Lemma 3.1]{Zhu:09}, for every $\rho\in\Omega_{k+1}$,
\begin{equation}
\frac{\mu_{\sigma}}{\mu_{\sigma^{\flat}}}=\left\{ \begin{array}{ll}
q_{j_{k+1}}\;\;\;\;\;\;\;\;\;\;\;\;\;\;\; & \mbox{if}\;\ell(k+1)=\ell(k)\\
p_{i_{\ell(k)+1}j_{\ell(k)+1}}q_{j_{k+1}}/q_{j_{\ell(k)+1}}\;\;\;\;\;\; & \mbox{if}\;\ell(k+1)=\ell(k)+1
\end{array}.\right.\label{measratio}
\end{equation}
Hence, for every $\tau\in\Lambda(\sigma,h)$ and $\overline{q}:=\max_{j\in G_{y}}q_{j}$,
we have
\begin{eqnarray}
\nu_{\sigma}(h_{\sigma}^{-1}(F_{\tau}))= & = & \mu_{\tau}\mu_{\sigma}^{-1}\leq\overline{q}^{h}.\label{pre90}
\end{eqnarray}
Let $t:=-\frac{\log\overline{q}}{\log m}$. Then by (\ref{pre80}),
(\ref{pre90}) and \cite[p. 737]{Hut:81} (cf. Lemma 3.1 of \cite{zhu:12}),
one sees that there is some constant $\widetilde{C}>0$ such that
\[
\nu_{\sigma}(B_{\epsilon}(x))\leq\widetilde{C}\epsilon^{t}\;\;{\rm for\; all}\;\;\epsilon\in(0,m^{-1})\;\;{\rm and}\;\; x\in\mathbb{R}^{2}.
\]
The proof of the lemma is then complete by \cite[Lemma 12.3]{GL:00}. \end{proof}
\begin{remark}
\label{rem2} {\rm Let $B_{k}:=t^{-1}(\log k+C)$. By (\ref{pre5}) and
the proof of Theorem 3.4 of \cite[p. 703]{GL:04}, one easily sees
that $\inf_{\sigma\in\Omega^{*}}\hat{e}_{k}(\nu_{\sigma})\geq B_{k}$.
Further, by (\ref{pre5}) and \cite[p.713]{GL:04}, for every pair
$p,q>1$ with $p^{-1}+q^{-1}=1$,
\begin{eqnarray*}
\hat{e}_{n}(\mu)-\hat{e}_{n+1}(\mu)\leq|\log(3|E|)|(n+1)^{-1}+C^{1/q}q t^{-1}(n+1)^{-1/p},
\end{eqnarray*}
Hence, for fixed integers $k_{1},k_{2},k_{3}\geq1$, one can find
an integer $L$ such that $k\geq L$ implies (cf. \cite[Lemma 2.2]{Zhu:13})
$\hat{e}_{k-k_{1}-k_{2}}(\nu_{\sigma})-\hat{e}_{k+k_{3}}(\nu_{\sigma})<\log2$
for all $\sigma\in\Omega^{*}$.}
\end{remark}
\begin{remark}
\label{rem3} {\rm For $\epsilon>0$ and a set $A\subset\mathbb{R}^{2}$,
let $(A)_{\epsilon}$ be the closed $\epsilon$-neighborhood of $A$.
Let $L_{1}$ be the smallest number of closed balls of radii $8^{-1}|F_{\sigma}|$
which are centered in $F_{\sigma}$ and cover $F_{\sigma}$, and let
$\gamma_{\sigma}$ be the set of centers of such $L_{1}$ balls. We define
\begin{eqnarray*}
\beta_{\sigma}(\alpha):=\alpha\cap(F_{\sigma})_{8^{-1}\delta^{-1}|F_{\sigma}|}
\;\;{\rm and}\;\; l_{\sigma}(\alpha):={\rm card}(\beta_{\sigma}(\alpha));\;\;\alpha\subset\mathbb{R}^{2}.
\end{eqnarray*}
Then, by the definition of $\nu_\sigma$, we deduce
\begin{eqnarray*}
\int_{F_{\sigma}}\log d(x,\gamma)d\mu(x) & \geq & \int_{F_{\sigma}}\log d(x,\beta_{\sigma}(\alpha)\cup\gamma_\sigma)d\mu(x)\\
 & = & \mu_{\sigma}\int\log d(x,\beta_{\sigma}\cup\gamma)d\nu_{\sigma}\circ h_{\sigma}^{-1}(x)\\
 & \geq & \mu_{\sigma}(\log m^{-|\sigma|}+\hat{e}_{l_{\sigma}(\gamma)+L_{1}}(\nu_{\sigma})).
\end{eqnarray*}}

\end{remark}
Next, we give an estimate $\hat{e}_{n}(\mu)$ for the subsequence
$(\psi_{j})_{j=1}^{\infty}$ of $(n)_{n=1}^{\infty}$.
\begin{lemma}
\label{pre2} There exist constants $C_{5},C_{6}$ such that
\begin{eqnarray}
\sum_{\sigma\in\Lambda_{j}}\mu_{\sigma}\log m^{-|\sigma|}+C_{5}\leq\hat{e}_{\psi_j}(\mu)\leq\sum_{\sigma\in\Lambda_{j}}\mu_{\sigma}\log m^{-|\sigma|}+C_{6}.\label{pre1}
\end{eqnarray}
\end{lemma}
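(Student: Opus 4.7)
My plan is to prove the two inequalities in \eqref{pre1} separately, using Remark \ref{rem3} for the lower bound together with the uniform small-ball estimate from Remark \ref{rem2}, and a direct construction of a quantizer for the upper bound.

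For the upper bound $\hat{e}_{\psi_j}(\mu)\leq\sum_{\sigma\in\Lambda_j}\mu_\sigma\log m^{-|\sigma|}+C_6$, I would pick, for each $\sigma\in\Lambda_j$, an arbitrary point $a_\sigma\in F_\sigma$ and set $\alpha_0:=\{a_\sigma:\sigma\in\Lambda_j\}$, so that ${\rm card}(\alpha_0)\leq\psi_j$. Since the interiors of $\{F_\sigma\}_{\sigma\in\Lambda_j}$ are pairwise disjoint and their union covers $E$, every $x\in E$ lies in some $F_\sigma$ with $d(x,\alpha_0)\leq|F_\sigma|\leq\delta m^{-|\sigma|}$ by (\ref{diameter}). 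Integrating against $\mu$ yields the desired inequality with $C_6:=\log\delta$.

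For the lower bound, I would fix an arbitrary $\gamma\subset\mathbb{R}^2$ with ${\rm card}(\gamma)\leq\psi_j$ and apply the estimate from Remark \ref{rem3} to every $\sigma\in\Lambda_j$. Summing over $\sigma$, which is legitimate since the $F_\sigma$ have pairwise disjoint interiors and together cover $E$, gives
\[
\int\log d(x,\gamma)\,d\mu(x)\geq\sum_{\sigma\in\Lambda_j}\mu_\sigma\log m^{-|\sigma|}+\sum_{\sigma\in\Lambda_j}\mu_\sigma\hat{e}_{l_\sigma(\gamma)+L_1}(\nu_\sigma).
\]
It therefore suffices to show the second sum on the right-hand side is bounded below by a constant independent of $j$ and $\gamma$; taking the infimum over $\gamma\in C_{\psi_j}(\mu)$ then produces the desired $C_5$.

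The main obstacle is to control this second sum. Remark \ref{rem2} provides an estimate of the form $\hat{e}_k(\nu_\sigma)\geq-A\log k-B$ uniformly in $\sigma\in\Omega^*$, and Jensen's inequality applied to the concave function $\log$ reduces the task to bounding $\sum_{\sigma\in\Lambda_j}\mu_\sigma\,l_\sigma(\gamma)$ by a constant. The crucial geometric step is to show that, for each $a\in\gamma$, the cardinality of the set $S_a:=\{\sigma\in\Lambda_j:a\in(F_\sigma)_{8^{-1}\delta^{-1}|F_\sigma|}\}$ is bounded by a constant $M$ depending only on $m$ and $n$; this I would derive from the antichain property of $\Lambda_j$, the bounded aspect ratio of the approximate squares, and the smallness of the thickening factor $1+8^{-1}\delta^{-1}$, which together prevent too many $F_\sigma$'s of any size from clustering near a common point. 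Once the overlap bound is available, one computes
\[
\sum_{\sigma\in\Lambda_j}\mu_\sigma\,l_\sigma(\gamma)\leq\max_{\sigma\in\Lambda_j}\mu_\sigma\cdot\sum_{\sigma\in\Lambda_j}l_\sigma(\gamma)\leq(j^{-1}\eta_0)\cdot M\psi_j\leq M\eta_0^{-1},
\]
using $\mu_\sigma<j^{-1}\eta_0$ for $\sigma\in\Lambda_j$ together with $\psi_j\leq j\eta_0^{-2}$ from Lemma \ref{rem1}(A). This bounds $\sum\mu_\sigma\log(l_\sigma+L_1)$ uniformly in $j$ and $\gamma$ and completes the argument.
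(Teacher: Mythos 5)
Your upper bound is exactly the paper's argument and is fine. The lower bound, however, has a genuine gap at the crucial geometric step. You claim that for each $a\in\gamma$ the set $S_a:=\{\sigma\in\Lambda_j: a\in(F_\sigma)_{8^{-1}\delta^{-1}|F_\sigma|}\}$ has cardinality bounded by a constant $M$ depending only on $m$ and $n$, deduced from the antichain property, the bounded aspect ratio, and the smallness of the thickening. This is false. The orders of the words in $\Lambda_j$ range over an interval $[k_{1j},k_{2j}]$ whose length is of order $\log j$, and a point $a$ can be approached from one side by a chain of pairwise non-overlapping approximate squares $F_{\sigma_1},F_{\sigma_2},\ldots$ of orders $k_1<k_2<\cdots$ whose nearest faces converge to $a$, with $d(a,F_{\sigma_i})=\epsilon_i\leq 8^{-1}\delta^{-1}m^{-k_i}$; disjointness only forces $m^{-k_{i+1}}\leq\epsilon_i$, i.e. $k_{i+1}\geq k_i+\log_m(8\delta)$, so such a chain can contain on the order of $k_{2j}-k_{1j}$ squares. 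The antichain and aspect-ratio conditions give at most a bounded number of squares of $S_a$ \emph{per order}, so all one can actually prove is ${\rm card}(S_a)=O(\log j)$. Feeding that into your own estimates gives $\sum_{\sigma}\mu_\sigma l_\sigma(\gamma)=O(\log j)$, and after Jensen a defect of order $-\log\log j$, which tends to $-\infty$; no constant $C_5$ comes out.

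The deeper reason the approach fails is that you work with an \emph{arbitrary} $\gamma$ of cardinality at most $\psi_j$, for which the required multiplicity bound is simply not available. The paper instead takes an optimal set $\alpha\in C_{\psi_j}(\mu)$ and proves the dual bound $l_\sigma(\alpha)\leq L$ uniformly in $\sigma\in\Lambda_j$ for large $j$, by combining Remark \ref{rem2} with the redistribution argument of \cite[Proposition 3.4]{Zhu:10}: since all $\sigma\in\Lambda_j$ have comparable measure $\mu_\sigma\asymp j^{-1}$, an optimal set cannot pile up many points near a single $F_\sigma$, as moving some of them to a deprived cell would strictly decrease the error by more than the uniform continuity estimate $\hat{e}_{k-k_1-k_2}(\nu_\sigma)-\hat{e}_{k+k_3}(\nu_\sigma)<\log 2$ permits. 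With ${\rm card}(\beta_\sigma(\alpha)\cup\gamma_\sigma)\leq L+L_1$ in hand, the sum $\sum_\sigma\mu_\sigma\hat{e}_{L+L_1}(\nu_\sigma)\geq B_{L+L_1}$ is trivially a constant. Some use of optimality (or near-optimality) of the quantizer is indispensable here; the purely geometric overlap argument you propose cannot be repaired.
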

\begin{proof}
For each $\sigma\in\Lambda_{j}$, we take an arbitrary point $a_{\sigma}\in F_{\sigma}$.
Then, using (\ref{diameter}),
\begin{eqnarray*}
\hat{e}_{\psi_j}(\mu) & \leq & \sum_{\sigma\in\Lambda_{j}}\int_{F_{\sigma}}\log d(x,a_{\sigma})d\mu(x)
\leq\sum_{\sigma\in\Lambda_{j}}\mu_{\sigma}\log\big(\delta m^{-|\sigma|}\big).
\end{eqnarray*}
Let $\sigma,\tau\in\Lambda_{j}$ with $|\sigma|\leq|\tau|$. By (\ref{hyposep}),
we have, $d(F_{\sigma},F_{\tau})\geq\delta^{-1}\max\{|F_{\sigma}|,|F_{\tau}|\}$.
Let $\alpha\in C_{\psi_{j}}(\mu)$. Using Remark \ref{rem2} and the
method in \cite[Proposition 3.4]{Zhu:10}, we can find a constant
$L\in\mathbb{N}$ such that $l_\sigma(\alpha)\leq L$
for all large $j$ and all $\sigma\in\Lambda_{j}$. Set $\overline{L}:=L+L_{1}$.
Then ${\rm card}(\beta_{\sigma}(\alpha)\cup\gamma_{\sigma})\leq\overline{L}$
for each $\sigma\in\Lambda_{j}$. By Remark \ref{rem3}, (\ref{pre7}),
the first part of Remark \ref{rem2} and \cite[Theorem 2.5]{GL:04}, we
deduce
\begin{eqnarray*}
\hat{e}_{\psi_j}(\mu) & \geq & \sum_{\sigma\in\Lambda_{j}}\mu_{\sigma}\int\log d(x,\beta_{\sigma}(\alpha^{^{}})\cup\gamma_{\sigma})d\nu_{\sigma}\circ h_{\sigma}^{-1}(x)\\
 & \geq & \sum_{\sigma\in\Lambda_{j}}\mu_{\sigma}(\log m^{-|\sigma|}+\hat{e}_{\overline{L}}(\nu_{\sigma}))\geq\sum_{\sigma\in\Lambda_{j}}\mu_{\sigma}\log m^{-|\sigma|}+B_{\overline{L}}.
\end{eqnarray*}
By setting $C_{5}:=B_{\overline{L}}$ and $C_{6}:=\log\delta$, the
lemma follows.
\end{proof}

\section{Proof of Theorem \ref{mthm1}}

For the proof of Theorem \ref{mthm1}, we need a series of lemmas.
For $r>0$, set
\[
P_{r}:=\sum_{(i,j)\in G}(p_{ij}m^{-r})^{\frac{s_{r}}{s_{r}+r}},\;\quad Q_{r}:=\sum_{j\in G_{y}}(q_{j}m^{-r})^{\frac{s_{r}}{s_{r}+r}}\,.
\]

\begin{lemma}
\label{kzlem1} For $r>0$, $s_{r}$ defined as in (\ref{maineq1})
and $\kappa_{r}:=s_{r}(s_{r}+r)^{-1}$ we have
\[
\kappa_{r}=\chi_{r}:=\inf\bigg\{ t\in\mathbb{R}:g(t):=\sum_{\sigma\in\Omega^{*}}(\mu_{\sigma}m^{-|\sigma|r})^{t}<\infty\bigg\}.
\]
\end{lemma}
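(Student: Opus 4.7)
The plan is to evaluate $g(t)$ level by level, reduce each level to a product over the generators, and then read off the threshold from equation (\ref{maineq1}).

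First I would split the sum according to the length of $\sigma$:
\[
g(t)=\sum_{k\geq 1}\sum_{\sigma\in\Omega_{k}}\bigl(\mu_{\sigma}m^{-|\sigma|r}\bigr)^{t}.
\]
For $\sigma\in\Omega_{k}$ written as in (\ref{sg1}) one has $|\sigma|=k$ and $\mu_{\sigma}=\prod_{h=1}^{\ell(k)}p_{i_{h}j_{h}}\prod_{h=\ell(k)+1}^{k}q_{j_{h}}$, so the inner sum factorises:
\[
\sum_{\sigma\in\Omega_{k}}\bigl(\mu_{\sigma}m^{-|\sigma|r}\bigr)^{t}
=m^{-krt}\Bigl(\sum_{(i,j)\in G}p_{ij}^{t}\Bigr)^{\ell(k)}\Bigl(\sum_{j\in G_{y}}q_{j}^{t}\Bigr)^{k-\ell(k)}.
\]
Setting $P(t):=\sum_{(i,j)\in G}p_{ij}^{t}$, $Q(t):=\sum_{j\in G_{y}}q_{j}^{t}$ and
\[
A(t):=m^{-rt}P(t)^{\theta}Q(t)^{1-\theta},
\]
and using $\ell(k)=k\theta+O(1)$, each level sum is comparable to $A(t)^{k}$; more precisely there exist positive constants $c_{1}(t),c_{2}(t)$ with
\[
c_{1}(t)\,A(t)^{k}\;\leq\;\sum_{\sigma\in\Omega_{k}}\bigl(\mu_{\sigma}m^{-|\sigma|r}\bigr)^{t}\;\leq\; c_{2}(t)\,A(t)^{k}.
\]
(The $O(1)$ discrepancy in $\ell(k)$ only produces a bounded multiplicative factor involving $P(t)^{\pm 1}$, $Q(t)^{\pm 1}$.)

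Next I would read off the critical value. By the very definition (\ref{maineq1}) of $s_{r}$, plugging $t=\kappa_{r}=s_{r}/(s_{r}+r)$ gives exactly $A(\kappa_{r})=1$. Because $0<p_{ij}<1$ and $0<q_{j}<1$ (recall $N\geq 2$ and (\ref{hypo1})), each of $m^{-rt}$, $P(t)$, $Q(t)$ is strictly decreasing in $t$, hence so is $A$. Therefore $A(t)<1$ for $t>\kappa_{r}$ and $A(t)>1$ for $t<\kappa_{r}$.

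Combining the two observations, for $t>\kappa_{r}$ the geometric series $\sum_{k}c_{2}(t)A(t)^{k}$ converges and $g(t)<\infty$; for $t<\kappa_{r}$ the lower bound $\sum_{k}c_{1}(t)A(t)^{k}$ diverges and $g(t)=\infty$. This yields $\chi_{r}=\kappa_{r}$. The only mildly delicate point is controlling the bounded fluctuation of $\ell(k)$ so that the level sum is genuinely pinched between constant multiples of $A(t)^{k}$; once this is done the conclusion is immediate from the monotonicity of $A$ and the identity $A(\kappa_{r})=1$.
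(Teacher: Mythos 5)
Your proof is correct and follows essentially the same route as the paper: both factorise the level sum over $\Omega_{k}=G^{\ell(k)}\times G_{y}^{k-\ell(k)}$ into $m^{-krt}P(t)^{\ell(k)}Q(t)^{k-\ell(k)}$, control the $O(1)$ discrepancy between $\ell(k)$ and $k\theta$ by a bounded multiplicative factor, and compare with the geometric series whose ratio is pinned to $1$ at $t=\kappa_{r}$ by (\ref{maineq1}). The only cosmetic difference is that the paper proves divergence at $t=\kappa_{r}$ itself (via the lower bound $P_{r}^{-1}Q_{r}$ on each level sum) rather than for $t<\kappa_{r}$, which is an equivalent way to obtain $\kappa_{r}\leq\chi_{r}$.
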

\begin{proof}
By (\ref{maineq1}), we clearly have $Q_{r}\leq1\leq P_{r}$. For
every $k\geq1$, we have
\begin{eqnarray}
\sum_{\sigma\in\Omega_{k}}(\mu_{\sigma}m^{-|\sigma|r})^{\kappa_{r}}\geq P_{r}^{k\theta-1}Q_{r}^{k(1-\theta)+1}=P_{r}^{-1}Q_{r}>0.\label{kz7}
\end{eqnarray}
Hence, $g(\kappa_{r})=\infty$. Since $g$ is strictly decreasing,
we have, $\kappa_{r}\leq\chi_{r}$. On the other hand, by (\ref{maineq1}),
for any $t>\kappa_{r}$, we have,
\[
m^{-rt}\bigg(\sum_{(i,j)\in G}p_{ij}^{t}\bigg)^{\theta}\bigg(\sum_{j\in G_{y}}q_{j}^{t}\bigg)^{1-\theta}=:C(t)<1
\]
and hence
\[
g(t)=\sum_{k=1}^{\infty}\sum_{\sigma\in\Omega_{k}}(\mu_{\sigma}m^{-|\sigma|r})^{t}\leq\sum_{k=1}^{\infty}C(t)^{k}=\frac{C(t)}{1-C(t)}<\infty.
\]
This implies that $t\geq\chi_{r}$. By the arbitrariness of $t$,
we conclude that $\kappa_{r}\geq\chi_{r}$.
\end{proof}

Let $\underline{\eta}_{r}$ and $\overline{\eta}_{r}$ be as defined
in subsection 2.1. We set
\begin{eqnarray*}
\lambda_{1}:=-\log\underline{\eta}_{r}
\;\;{\rm and}\;\; \lambda_{2}:=-\log\overline{\eta}_{r}.
\end{eqnarray*}
Then, by (\ref{measratio}),
for every $\sigma\in\Omega^{*}$, we have
\begin{eqnarray}
e^{-\lambda_1} & \leq & \left(\mu_{\sigma}m^{-|\sigma|r}\right)\left(\mu_{\sigma^\flat}m^{-|\sigma^\flat|r}\right)^{-1}\leq e^{-\lambda_{2}(r)}.\label{kz6}
\end{eqnarray}
For $r>0$ and each $k\geq1$, we define
\begin{eqnarray}
\Lambda_{k,r} & := & \left\{ \sigma\in\Omega^{*}:e^{-(k+1)\lambda_1}\leq\mu_{\sigma}m^{-|\sigma|r}<e^{-k\lambda_1}\right\} ,\label{almostanti}\\
\widetilde{\Lambda}_{k,r} & := & \left\{ \sigma\in\Omega^{*}:\mu_{\sigma^{\flat}}m^{|\sigma^{\flat}|r}\geq e^{-k\lambda_1}>\mu_{\sigma}m^{-|\sigma|r}\right\} .\label{newanti}
\end{eqnarray}
We write $\varphi_{k,r}:={\rm card}(\Lambda_{k,r})$ and $\widetilde{\varphi}_{k,r}:={\rm card}(\widetilde{\Lambda}_{k,r})$.
Note that, $\Lambda_{k,r},k\geq1$, are pairwise disjoint; for every
$\sigma\in\Omega^{*}$, there is a unique $k\geq 0$ such that $\sigma\in\Lambda_{k,r}$.
Thus,
\begin{eqnarray}\label{kz10}
\Omega^{*}=\bigcup_{k=0}^{\infty}\Lambda_{k,r}\;\mbox{ and }\;\Lambda_{k_{1},r}\cap\Lambda_{k_{2},r}=\emptyset,\; k_{1}\neq k_{2}.\label{kz5}
\end{eqnarray}

\begin{lemma}
\label{kzlemma2} For every $r>0$ we have
\[
u_r:=\limsup_{k\to\infty}\frac{1}{\lambda_1k}\varphi_{k,r}=\kappa_{r}.
\]
\end{lemma}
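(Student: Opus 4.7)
My plan is to combine the characterization of $\kappa_{r}$ from Lemma \ref{kzlem1} with the decomposition of $\Omega^{*}$ recorded in \eqref{kz10}, reading $u_{r}$ as $\limsup_{k\to\infty}\frac{1}{\lambda_{1}k}\log\varphi_{k,r}$ (without the $\log$ the right-hand side cannot be finite). Under this reading the identity $u_{r}=\chi_{r}$ is a straightforward root-test calculation applied to $g(t)=\sum_{\sigma\in\Omega^{*}}(\mu_{\sigma}m^{-|\sigma|r})^{t}$; Lemma \ref{kzlem1} then converts $\chi_{r}$ to $\kappa_{r}$ and closes the argument.

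First I would verify that $\varphi_{k,r}<\infty$, since otherwise the decomposition is not directly usable. Iterating the one-step ratio bound \eqref{kz6} from $\sigma$ down to its length-one prefix yields
\[
\mu_{\sigma}m^{-|\sigma|r}\leq e^{-(|\sigma|-1)\lambda_{2}}\cdot\overline{\eta}_{r}.
\]
Combined with the lower bound $\mu_{\sigma}m^{-|\sigma|r}\geq e^{-(k+1)\lambda_{1}}$ enjoyed by any $\sigma\in\Lambda_{k,r}$, this forces $|\sigma|\leq(k+1)\lambda_{1}/\lambda_{2}+O(1)$, and since there are only finitely many words of bounded length in $\Omega^{*}$, the cardinality $\varphi_{k,r}$ is finite.

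Next, for any $t>0$, the two-sided bound defining $\Lambda_{k,r}$ in \eqref{almostanti} together with the disjoint union \eqref{kz10} gives the sandwich
\[
\sum_{k=0}^{\infty}\varphi_{k,r}\, e^{-(k+1)\lambda_{1}t}\leq g(t)\leq\sum_{k=0}^{\infty}\varphi_{k,r}\, e^{-k\lambda_{1}t}.
\]
Both series converge or diverge simultaneously. Setting $a_{k}:=\varphi_{k,r}e^{-k\lambda_{1}t}$ I would compute $\limsup_{k}a_{k}^{1/k}=e^{\lambda_{1}(u_{r}-t)}$, so by the root test $g(t)<\infty$ exactly when $t>u_{r}$. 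Hence $\chi_{r}=u_{r}$, and combining with Lemma \ref{kzlem1} concludes $u_{r}=\kappa_{r}$.

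The only technical point requiring genuine care is the a priori finiteness of $\varphi_{k,r}$ above; once that is in hand, everything else is a routine comparison of a Dirichlet-type series. The boundary case $t=u_{r}$ poses no difficulty because $\chi_{r}$ is defined as an infimum, so possible divergence at the critical exponent does not affect the equivalence of $u_{r}$ and $\chi_{r}$.
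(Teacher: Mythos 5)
Your proof is correct and follows essentially the same route as the paper's: both reduce the claim to Lemma \ref{kzlem1} via the partition (\ref{kz10}) and compare $g(t)$ with $\sum_{k}\varphi_{k,r}e^{-k\lambda_{1}t}$, your root-test packaging being just a condensed form of the paper's two separate $\epsilon$-arguments for $t>u_{r}$ and $t<u_{r}$. You also correctly read the missing $\log$ in the statement of the lemma, and your preliminary check that $\varphi_{k,r}<\infty$ is a welcome detail that the paper leaves implicit.
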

\begin{proof}
The proof relies on the identity in Lemma \ref{kzlem1}. Fix $t>u_r$
and $0<\epsilon<t-u_r$. Then for all sufficiently large $k\in\mathbb{N}$
($k\geq n_{0},$ say) we have
\[
\frac{1}{\lambda_1k}\log\varphi_{k,r}\leq t-\epsilon.
\]
Therefore, with $R:=\sum_{k=0}^{n_{0}-1}\sum_{\sigma\in\Lambda_{k,r}}m^{-rt\left|\sigma\right|}\mu_{\sigma}^{t}$, by (\ref{kz10}),
we have
\begin{eqnarray*}
\sum_{\sigma\in\Omega^{*}}m^{-rt\left|\sigma\right|}\mu_{\sigma}^{t} & = & \sum_{k\in\mathbb{N}_{0}}\sum_{\sigma\in\Lambda_{k,r}}m^{-rt\left|\sigma\right|}\mu_{\sigma}^{t}\\
 & \leq & \sum_{k=0}^{n_{0}-1}\sum_{\sigma\in\Lambda_{k,r}}m^{-rt\left|\sigma\right|}\mu_{\sigma}^{t}+\sum_{k\geq n_{0}}\sum_{\sigma\in\Lambda_{k,r}}m^{-rt\left|\sigma\right|}\mu_{\sigma}^{t}\\
 & \leq & R+\sum_{k\geq n_{0}}\sum_{\sigma\in\Lambda_{k,r}}\mbox{e}^{-\lambda_1kt}=R+\sum_{k\geq n_{0}}\varphi_{k,r}\mbox{e}^{-\lambda_1kt}\\
 & \leq & R+\sum_{k\geq n_{0}}\mathrm{e}^{\left(t-\epsilon\right)\lambda_1k}\mathrm{e}^{-\lambda_1kt}=R+\sum_{k\geq n_{0}}\mathrm{e}^{-\epsilon\lambda_1k}<\infty.
\end{eqnarray*}
 Hence, by Lemma \ref{kzlem1}, $t\geq\kappa_{r}$. This shows that
$\kappa_{r}\leq u_r$.

Now suppose $t<u_r$ and fix $0<\epsilon<u_r-t$. Then there exists
a strictly increasing sequence $\left(n_{j}\right)\in\mathbb{N}^{\mathbb{N}}$
such that for all $j\in\mathbb{N}$ we have
\[
\frac{1}{\lambda_1n_{j}}\log\varphi_{n_{j},r}\geq t+\epsilon.
\]
This time, we therefore have
\begin{eqnarray*}
\sum_{\sigma\in\Omega^{*}}m^{-rt\left|\sigma\right|}\mu_{\sigma}^{t} & = & \sum_{n\in\mathbb{N}}\sum_{\sigma\in\Lambda_{k,r}}\mu_{\sigma}^{t}m^{-rt\left|\sigma\right|}\geq\sum_{j\in\mathbb{N}}
\sum_{\sigma\in\Lambda_{n_{j},r}}m^{-rt\left|\sigma\right|}\mu_{\sigma}^{t}\\
 & \geq & \sum_{j\in\mathbb{N}}\sum_{\sigma\in\Lambda_{n_{j},r}}\mathrm{e}^{-(n_j+1)\lambda_1t}=
 \sum_{j\in\mathbb{N}}\varphi_{n_{j},r}\mathrm{e}^{-\left(n_j+1\right)\lambda_1t}\\
 & \geq & \mathrm{e}^{-\lambda_1t}\sum_{j\in\mathbb{N}}\mathrm{e}^{\left(t+\epsilon\right)\lambda_1n_{j}}\mathrm{e}^{-\lambda_1 n_{j}t}=\mathrm{e}^{-\lambda_1t}\sum_{j\in\mathbb{N}}\mathrm{e}^{\epsilon\lambda_1n_{j}}=\infty.
\end{eqnarray*}
Hence, by Lemma \ref{kzlem1}, $t\leq\kappa_{r}$. It follows that $\kappa_{r}\geq u_r$. This completes the proof of the lemma.
\end{proof}
\begin{lemma}
\label{kzcommon}The sequences $\left(k^{-1}\log\varphi_{k,r}\right)_{k\in\mathbb{N}}$
and $\left(k^{-1}\log\widetilde{\varphi}_{k,r}\right)_{k\in\mathbb{N}}$
are both convergent and the corresponding limits coincide. \end{lemma}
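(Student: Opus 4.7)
\emph{Comparison step.} My first goal is to show that $\varphi_{k,r}$ and $\widetilde\varphi_{k,r}$ differ by at most a multiplicative constant independent of $k$; this reduces the two claimed convergences to one. If $\sigma\in\widetilde\Lambda_{k,r}$ then the left-hand estimate of (\ref{kz6}) gives $\mu_\sigma m^{-|\sigma|r}\geq e^{-\lambda_1}\mu_{\sigma^\flat}m^{-|\sigma^\flat|r}\geq e^{-(k+1)\lambda_1}$, which together with $\mu_\sigma m^{-|\sigma|r}<e^{-k\lambda_1}$ places $\sigma$ in $\Lambda_{k,r}$; hence $\widetilde\varphi_{k,r}\leq\varphi_{k,r}$. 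Conversely, for each $\sigma\in\Lambda_{k,r}$ the values $\mu_\cdot m^{-|\cdot|r}$ are strictly increasing along the ancestor chain $\sigma\succeq\sigma^\flat\succeq\ldots$ by (\ref{kz6}), so there is a unique $\tilde\sigma\in\widetilde\Lambda_{k,r}$ on this chain. Iterating the right-hand estimate of (\ref{kz6}) $h$ times shows $\mu_\sigma m^{-|\sigma|r}<e^{-h\lambda_2-k\lambda_1}$ whenever $\sigma$ lies $h$ generations below $\tilde\sigma$; together with $\mu_\sigma m^{-|\sigma|r}\geq e^{-(k+1)\lambda_1}$ this forces $h<\lambda_1/\lambda_2$. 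Since $\Omega^*$ has uniformly bounded branching --- at most $|G|\cdot|G_y|$ children per node, as can be read off from the two cases of $\sigma^\flat$ --- each $\tilde\sigma$ has at most some $M\in\mathbb{N}$ preimages (depending only on $|G|$, $|G_y|$ and $\lambda_1/\lambda_2$), giving $\varphi_{k,r}\leq M\widetilde\varphi_{k,r}$. Hence $|k^{-1}\log\varphi_{k,r}-k^{-1}\log\widetilde\varphi_{k,r}|\to 0$, so both sequences share their limsup, liminf, and any limit.

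\emph{Convergence step.} It then suffices to prove $k^{-1}\log\widetilde\varphi_{k,r}$ converges, for which my plan is a supermultiplicative estimate
\[
\widetilde\varphi_{k_1+k_2,r}\geq C\,\widetilde\varphi_{k_1,r}\,\widetilde\varphi_{k_2,r}\qquad\text{for all }k_1,k_2\in\mathbb{N},
\]
with $C>0$ independent of $k_1,k_2$. Fekete's lemma applied to the superadditive sequence $k\mapsto\log(C\widetilde\varphi_{k,r})$ then gives $\lim_k k^{-1}\log\widetilde\varphi_{k,r}=\sup_k k^{-1}\log(C\widetilde\varphi_{k,r})$, which is pinned down to $\lambda_1\kappa_r$ by Lemma \ref{kzlemma2} combined with the comparison step. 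I would establish the supermultiplicative inequality by partitioning $\widetilde\Lambda_{k_1+k_2,r}$ according to the unique ancestor in $\widetilde\Lambda_{k_1,r}$ of each of its elements, and for each fixed $\sigma\in\widetilde\Lambda_{k_1,r}$ bounding from below the set of its descendants lying in $\widetilde\Lambda_{k_1+k_2,r}$ by a ``relative'' stopping antichain of $\Omega^*$-descendants of $\sigma$ at threshold $\asymp e^{-k_2\lambda_1}$.

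\emph{Main obstacle.} The hardest point is to prove that this relative antichain has cardinality at least a constant fraction of $\widetilde\varphi_{k_2,r}$ uniformly in $\sigma\in\widetilde\Lambda_{k_1,r}$. Unlike the self-similar case, the subtree of $\Omega^*$ rooted at $\sigma$ is not isomorphic to $\Omega^*$: both its branching pattern and its one-step weight ratios depend, via (\ref{measratio}), on the position of $\ell(\cdot)$ at length $|\sigma|$ and on the first $G_y$-coordinate $j_{\ell(|\sigma|)+1}$ of $\sigma$. Both dependencies, however, involve only a bounded window of data --- controlled by $\lceil\theta^{-1}\rceil$, $\lceil(1-\theta)^{-1}\rceil$ and $|G_y|$ --- so only finitely many distinct ``phases'' arise, and the required uniform comparison should follow from a case analysis using the self-affine recursion (\ref{selfaffinemeas}).
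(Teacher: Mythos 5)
Your comparison step is correct and is essentially the paper's argument: $\widetilde{\Lambda}_{k,r}\subset\Lambda_{k,r}$ via (\ref{kz6}), and each $\sigma\in\Lambda_{k,r}$ sits at bounded depth below its unique ancestor in the maximal antichain $\widetilde{\Lambda}_{k,r}$, so bounded branching gives $\varphi_{k,r}\leq M\widetilde{\varphi}_{k,r}$. The convergence step, however, contains a genuine gap, and it is exactly at the point you flag as the ``main obstacle.'' Your plan requires that for \emph{every} $\sigma\in\widetilde{\Lambda}_{k_{1},r}$ the relative stopping antichain of descendants of $\sigma$ at threshold $\asymp e^{-k_{2}\lambda_{1}}$ has cardinality $\gtrsim\widetilde{\varphi}_{k_{2},r}$. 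Your justification --- that the subtree below $\sigma$ depends only on a bounded window of data, hence on finitely many ``phases'' --- is not correct. When one refines an approximate square $F_{\sigma}$, each step at which $\ell(\cdot)$ increases converts the \emph{front entry of the queue} $\sigma_{b}$ into a pair $(i,j)$ with $j$ prescribed, so the branching there is ${\rm card}(G_{x,j})$ for that prescribed $j$, and the one-step weight ratios in (\ref{measratio}) involve these queued symbols as well. Over a descent of depth $h$ one consumes a prefix of $\sigma_{b}$ of length $\approx\theta h$, which is unbounded as $k_{2}\to\infty$; the relevant data is not a bounded window. Worse, the uniform lower bound can actually fail for individual $\sigma$: if $\sigma_{b}$ is a long run of a single symbol $j_{0}$ with ${\rm card}(G_{x,j_{0}})$ small (and $p_{i j_{0}}/q_{j_{0}}$ close to $1$), the relative weight tree below $\sigma$ has, for a long stretch, both a different branching rate and a different weight-decay rate than $\Omega^{*}$ seen from the root, so its stopping antichain at a given threshold need not be comparable in size to $\widetilde{\varphi}_{k_{2},r}$. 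Superadditivity cannot be obtained $\sigma$-by-$\sigma$ along this route without a substantial additional averaging argument.

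The paper circumvents this obstruction entirely by a different device: instead of descending below $\sigma$ in the tree of approximate squares, it builds from each pair $\sigma\in\Lambda_{m_{1},r}$, $\omega\in\Lambda_{m_{2},r}$ a single word $\rho(\sigma,\omega)=(\sigma_{a},\omega_{a},\ldots,\sigma_{b},\omega_{b})$ by concatenating the $a$-parts and the $b$-parts \emph{separately} (with at most two padding letters, or deleting one, to fix the value of $\ell$). This exploits the fact that $\Omega_{k}=G^{\ell(k)}\times G_{y}^{k-\ell(k)}$ imposes no compatibility constraint between the $a$-part and the $b$-part --- precisely the constraint that obstructs your descendant construction. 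The weight of $\rho(\sigma,\omega)$ is the product of the two weights up to uniformly bounded factors, the map is injective up to a factor $N$, and comparing ${\rm card}(\Lambda_{m_{1},m_{2},r})$ with $\varphi_{m_{1}+m_{2},r}$ (by the same argument as in the comparison step) yields
\[
\log\varphi_{m_{1},r}+\log\varphi_{m_{2},r}-\log(NB_{3})\leq\log\varphi_{m_{1}+m_{2},r},
\]
after which Fekete's lemma concludes, exactly as you intend. So your overall architecture (comparison, superadditivity, Fekete, identification of the limit via Lemma \ref{kzlemma2}) matches the paper, but the one step you leave open is the one that genuinely requires a new idea, and the idea needed is the $(\sigma_{a},\omega_{a};\sigma_{b},\omega_{b})$ splicing rather than a case analysis of phases.
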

\begin{proof}
We first show that, there is for some constant $M\in\mathbb{N}$,
such that
\begin{eqnarray}
\widetilde{\varphi}_{k,r}\leq\varphi_{k,r}\leq M\widetilde{\varphi}_{k,r}.\label{k1}
\end{eqnarray}
In fact, for every $\sigma\in\widetilde{\Lambda}_{k,r}$, by (\ref{kz6})
and (\ref{newanti}), we have
\begin{eqnarray*}
e^{-k\lambda_1}>\mu_{\sigma}m^{-|\sigma|r}\geq\mu_{\sigma^{\flat}}m^{|\sigma^{\flat}|r}\cdot e^{-\lambda_1}=e^{-(k+1)\lambda_1}.
\end{eqnarray*}
Hence, $\sigma\in\Lambda_{k,r}$ and $\widetilde{\Lambda}_{k,r}\subset\Lambda_{k,r}$.
It follows that $\widetilde{\varphi}_{k,r}\leq\varphi_{k,r}$. Next,
we show the inequality in the reverse direction. Let $\sigma$ be
an arbitrary word in $\Lambda_{k,r}$. As $\widetilde{\Lambda}_{k,r}$
is a finite maximal antichain, there is a word $\omega\in\widetilde{\Lambda}_{k,r}$
such that $\sigma\prec\omega$ or $\omega\prec\sigma$. However, if
$\sigma\prec\omega$, then we have
\begin{eqnarray*}
\mu_{\sigma}m^{-|\sigma|r}\geq\mu_{\omega^{\flat}}m^{-|\omega^{\flat}|r}\geq e^{-k\lambda_1}.
\end{eqnarray*}
This contradicts (\ref{almostanti}). Hence, $\omega\in\widetilde{\Lambda}_{k,r}$
and $\omega\prec\sigma$. Let $M_{0}$ be the smallest integer such
that $e^{M_{0}\lambda_2}\leq e^{-\lambda_1}$. Assume that
$|\sigma|-|\omega|>M_{0}$. Then by (\ref{kz6}),
\begin{eqnarray*}
\mu_{\sigma}m^{-|\sigma|r}\leq\mu_{\omega}m^{-|\omega|r}\cdot e^{-(M+1)\lambda_2}<e^{-k\lambda_1}\cdot e^{-\lambda_1}=e^{-(k+1)\lambda_1}.
\end{eqnarray*}
This again implies that $\sigma\notin\Lambda_{k,r}$, a contradiction.
By the above analysis, we conclude that $\Lambda_{k,r}\subset\bigcup_{\sigma\in\widetilde{\Lambda}_{k,r}}\Lambda(\sigma,M_{0})$.
Note that ${\rm card}(G_{x,j})\leq n$ and ${\rm card}(G_{y})\leq m$.
Hence,
\begin{eqnarray*}
{\rm card}(\Lambda(\sigma,M_{0}))\leq(mn)^{M_{0}}\;\;{\rm for\; all}\;\;\sigma\in\Omega^{*}.
\end{eqnarray*}
By setting $M:=(mn)^{M_{0}}+1$, (\ref{k1}) follows.

Now, it suffices to prove that the sequence $(k^{-1}\log\varphi_{k,r})_{k=1}^{\infty}$
is convergent. We complete the proof by showing that this sequence
is super-additive up to a constant difference. For this purpose, we
will establish a correspondence between elements of $\Lambda_{m+n,r}$
and those of $\Lambda_{m,r}$ and $\Lambda_{n,r}$.

Let $\sigma\in\Lambda_{m_{1},r}$ and $\omega\in\Lambda_{m_{2},r}$
be given. We write $k_{\sigma}:=|\sigma|,k_{\omega}:=|\omega|$ and
\begin{eqnarray*}
\sigma=((i_{1},j_{1}),\ldots,(i_{k_{\sigma}},j_{\ell(k_{\sigma}})),j_{\ell(k_{\sigma})+1},\ldots,j_{k_{\sigma}}),\\
\omega=((\widetilde{i}_{1},\widetilde{j}_{1}),\ldots,(\widetilde{i}_{\ell(k_{\omega})},\widetilde{j}_{\ell(k_{\omega})}),\widetilde{j}_{\ell(k_{\omega})+1},\ldots,\widetilde{j}_{k_{\omega}}).
\end{eqnarray*}
Then, by the definition of $\ell(k)$, we have
\begin{eqnarray*}
\ell(k_{\sigma})+\ell(k_{\omega})-1\leq\ell(k_{\sigma}+k_{\omega})=[(k_{\sigma}+k_{\omega})\theta]\leq\ell(k_{\sigma})+\ell(k_{\omega})+2.
\end{eqnarray*}
In the following, we need to distinguish two cases.
\begin{description}
\item [{Case~(1):}] $\ell(k_{\sigma}+k_{\omega})\geq\ell(k_{\sigma})+\ell(k_{\omega})$.
Let $H:=\ell(k_{\sigma}+k_{\omega})-\ell(k_{\sigma})-\ell(k_{\omega})$
and let $(\hat{i}_{h},\hat{j}_{h}),1\leq h\leq H$, be $H$ arbitrary
elements of $G$ and define
\begin{eqnarray*}
\rho=\rho(\sigma,\omega):=(\sigma_{a},\omega_{a},(\hat{i}_{1},\hat{j}_{1}),\ldots,(\hat{i}_{H},\hat{j}_{H}),\sigma_{b},\omega_{b})\in\Omega^{*}.
\end{eqnarray*}
Let $\underline{p}:=\min_{(i,j)\in G}p_{ij}$ and $B_{0}:=\underline{p}^{2}m^{-2r}$.
Note that $0\leq H\leq2$. We deduce
\begin{eqnarray*}
\mu_{\rho}m^{-|\rho|r}=\mu_{\sigma}m^{-|\sigma|r}\mu_{\omega}m^{-|\omega|r}\prod_{h=1}^Hp_{i_{h}j_{h}}m^{-r}\left\{ \begin{array}{ll}
<e^{-(m_{1}+m_{2})\lambda_1}\\
\geq B_{0}e^{-(m_{1}+m_{2}+2)\lambda_1}
\end{array}\right..
\end{eqnarray*}
For $\sigma^{(i)}\in\Lambda_{m,r}$, $\omega^{(i)}\in\Lambda_{n,r}$
with $\ell(k_{\sigma^{(i)}}+k_{\omega^{(i)}})\geq\ell(k_{\sigma^{(i)}})+\ell(k_{\omega^{(i)}})$
$i=1,2$, one can see the following equivalence:
\begin{eqnarray*}
\rho(\sigma^{(1)},\omega^{(1)})=\rho(\sigma^{(2)},\omega^{(2)})\;\;\mbox{iff}\;\;\sigma^{(1)}=\sigma^{(2)}\;\mbox{and }\;\omega^{(1)}=\omega^{(2)}.
\end{eqnarray*}

\item [{Case~(2):}] $\ell(k_{\sigma}+k_{\omega})=\ell(k_{\sigma})+\ell(k_{\omega})-1$.
In this case, we define
\begin{eqnarray*}
\rho(\sigma,\omega):=(\sigma_{a},\omega_{a}^{-},\sigma_{b},\omega_{b})\in\Omega^{*}.
\end{eqnarray*}
Let $B_{1}:=\underline{p}^{-1}m^{r}$. Since $\sigma\in\Lambda_{m_{1},r}$
and $\omega\in\Lambda_{m_{2},r}$, we have
\begin{eqnarray*}
\mu_{\rho}m^{-|\rho|r} & = & \frac{\mu_{\sigma}m^{-|\sigma|r}\mu_{\omega}m^{-|\omega|r}}{(p_{\widetilde{i}_{\ell(k_{\omega})}\widetilde{j}_{\ell(k_{\omega})}}m^{-r})}\left\{ \begin{array}{ll}
<B_{1}\mbox{e}^{-(m_{1}+m_{2})\lambda_1}\\
\geq\mbox{e}^{-(m_{1}+m_{2}+2)\lambda_1}
\end{array}\right..
\end{eqnarray*}
Let $\sigma^{(i)}\in\Lambda_{m_{1},r},\omega^{(i)}\in\Lambda_{m_{2},r}$
with $\ell(k_{\sigma^{(i)}}+k_{\omega^{(i)}})=\ell(k_{\sigma^{(i)}})+\ell(k_{\omega^{(i)}})-1,i=1,2$.
Then we have the following equivalence:
\begin{eqnarray*}
\rho(\sigma^{(1)},\omega^{(1)})=\rho(\sigma^{(2)},\omega^{(2)})\;\;\mbox{iff}\;\;\left\{ \begin{array}{ll}
\sigma^{(1)}=\sigma^{(2)}\\
(\omega_{a}^{(1)})^{-}=(\omega_{a}^{(2)})^{-};\;\sigma_{b}=\omega_{b}
\end{array}\right..
\end{eqnarray*}

\end{description}
We need to consider the following subset of $\Omega^{*}$:
\[
\Lambda_{m_{1},m_{2},r}:=\bigg\{\rho\in\Omega^{*}:B_{0}e^{-2\lambda_1}\leq\mbox{e}^{(m_{1}+m_{2})\lambda_1}\mu_{\rho}m^{-|\rho|r}<B_{1}\bigg\}.
\]
As we did for (\ref{k1}), one can show that, for some constants $B_{2},B_{3}>0$,
we have
\[
B_{2}\phi_{m_{1}+m_{2},r}\leq{\rm card}(\Lambda_{m_{1},m_{2},r})\leq B_{3}\phi_{m_{1}+m_{2},r}.
\]
Now combining case (1) and (2), one sees that, $\rho(\sigma,\omega)\in\Lambda_{m_{1},m_{2},r}$
for any pair $\sigma\in\Lambda_{m_{1},r},\omega\in\Lambda_{m_{2},r}$.
Moreover, we have
\[
\varphi_{m_{1},r}\cdot N^{-1}\varphi_{m_{2},r}\leq{\rm card}(\Lambda_{m_{1},m_{2},r})\leq B_{3}\phi_{m_{1}+m_{2},r}.
\]
By taking logarithms, it follows immediately that
\[
\log\varphi_{m_{1},r}+\log\varphi_{m_{2},r}-\log(NB_{3})\leq\log\phi_{m_{1}+m_{2},r}.
\]
which implies that $\lim_{k\to\infty}k^{-1}\log\varphi_{k,r}$ exists.
\end{proof}

\subsection{Proof of Theorem \ref{mthm1} }

We first show that $D_{r}(\mu)$ exists. For this purpose, we consider
the finite maximal antichains $\widetilde{\Lambda}_{k,r}$ as defined
in (\ref{newanti}). For $r>0$ let $\delta_{k,r}$ the unique solution
of
\[
\sum_{\sigma\in\widetilde{\Lambda}_{k,r}}(\mu_{\sigma}m^{-|\sigma|r})^{\frac{\delta_{k,r}}{\delta_{k,r}+r}}=1
\]
and set
\[
\overline{\delta}_{r}:=\limsup_{k\to\infty}\delta_{k,r},\;\underline{\delta}_{r}:=\liminf_{k\to\infty}\delta_{k,r}.
\]
Following the lines of \cite{zhu:12}, one can replace $\Gamma_{j}$
in there with $\widetilde{\Lambda}_{k,r}$ and obtain
\[
\overline{D}_{r}(\mu)=\overline{\delta}_{r},\;\;\underline{D}_{r}(\mu)=\underline{\delta}_{r}.
\]
Now by the definitions of $\widetilde{\Lambda}_{k,r}$ and $\delta_{k,r}$,
one gets
\begin{eqnarray*}
\widetilde{\varphi}_{k,r}\cdot e^{-k\lambda_1\frac{\delta_{k,r}}{\delta_{k,r}+r}}\geq1,\;\;\widetilde{\varphi}_{k,r}\cdot e^{-(k+1)\lambda_1\frac{\delta_{k,r}}{\delta_{k,r}+r}}\leq1.
\end{eqnarray*}
Taking logarithms on both sides of the preceding inequalities, we
have
\begin{eqnarray*}
\frac{1}{(k+1)\lambda_1}\log\widetilde{\phi}_{k,r}\leq\frac{\delta_{k,r}}{\delta_{k,r}+r}\leq\frac{1}{k\lambda_1}\log\widetilde{\phi}_{k,r}.
\end{eqnarray*}
This, combined with Lemma \ref{kzcommon} and Lemma \ref{kzlemma2},
yields
\begin{eqnarray*}
\lim_{k\to\infty}\frac{\delta_{k,r}}{\delta_{k,r}+r}=\lim_{k\to\infty}\frac{1}{k\lambda_1}\log\widetilde{\phi}_{k,r}=\lim_{k\to\infty}\frac{1}{k\lambda_1}\log\phi_{k,r}=\kappa_{r}=\frac{s_{r}}{s_{r}+r}.
\end{eqnarray*}
It hence follows that $D_{r}(\mu)=s_{r}$.

Finally, we will treat the remaining parts of the theorem separately.

\emph{ad (\ref{ConditionA}):} Assume that $C_{j,r},j\in G_{y}$ are
constant. We denote the common value by $\pi_{r}$. In this case,
$s_{r}=t_{r}$. Thus, by (\ref{maineq1}) or (\ref{maineq2}), we
have
\begin{equation}
P_{r}^{-1}\pi_{r}^{1-\theta}=1.\label{eq2}
\end{equation}
In order to show (\ref{main1}), we need an auxiliary probability
measure. Define
\[
\widetilde{p}_{ij}:=P_{r}^{-1}(p_{ij}m^{-r})^{\frac{s_{r}}{s_{r}+r}},\;(i,j)\in G.
\]
Let $\nu_{2}$ denote the self-affine measure on $E$ associated with
$(\widetilde{p}_{ij})_{(i,j)\in G}$. We have
\begin{eqnarray*}
\widetilde{q}_{j}:=\sum_{i\in G_{x,j}}\widetilde{p}_{ij}=P_{r}^{-1}\sum_{i\in G_{x,j}}(p_{ij}m^{-r})^{\frac{s_{r}}{s_{r}+r}},\; j\in G_{Y}.
\end{eqnarray*}
For $\sigma=((i_{1},j_{1}),\ldots,(i_{l},j_{l}),j_{l+1},\ldots,j_{k})\in\Omega^{*}$,
we have
\begin{eqnarray}
\nu_{2}(F_{\sigma}) & = & \prod_{h=1}^{l}P_{r}^{-1}(p_{i_{h}j_{h}}m^{-r})^{\frac{s_{r}}{s_{r}+r}}\prod_{h=l+1}^{k}P_{r}^{-1}\sum_{i\in G_{x,j_{h}}}(p_{ij_{h}}m^{-r})^{\frac{s_{r}}{s_{r}+r}}\nonumber \\
 & = & P_{r}^{-k}\prod_{h=1}^{l}(p_{i_{h}j_{h}}m^{-r})^{\frac{s_{r}}{s_{r}+r}}\prod_{h=l+1}^{k}(q_{j_{h}}m^{-r})^{\frac{s_{r}}{s_{r}+r}}\prod_{h=l+1}^{k}\sum_{i\in G_{x,j_{h}}}\bigg(\frac{p_{ij_{h}}}{q_{j_{h}}}\bigg)^{\frac{s_{r}}{s_{r}+r}}\nonumber \\
 & = & (\mu_{\sigma}m^{-|\sigma|r})^{\frac{s_{r}}{s_{r}+r}}P_{r}^{-k}\prod_{h=l+1}^{k}\sum_{i\in G_{x,j_{h}}}\bigg(\frac{p_{ij_{h}}}{q_{j_{h}}}\bigg)^{\frac{s_{r}}{s_{r}+r}}\nonumber \\
 & = & (\mu_{\sigma}m^{-|\sigma|r})^{\frac{s_{r}}{s_{r}+r}}P_{r}^{-k}\pi_{r}^{k-\ell(k)}.\label{inequal01}
\end{eqnarray}
Note that $\pi_{r}\geq1$. In view of (\ref{eq2}), we have
\[
1=(P_{r}^{-1}\pi_{r}^{1-\theta})^{k}\leq P_{r}^{-k}\pi_{r}^{k-\ell(k)}\leq(P_{r}^{-1}\pi_{r}^{1-\theta})^{k}\pi_{r}=\pi_{r}.
\]
This, together with (\ref{inequal01}), implies
\begin{eqnarray*}
\pi_{r}^{-1}=\pi_{r}^{-1}\sum_{\sigma\in\Gamma_{j,r}}\nu(F_{\sigma})\leq\sum_{\sigma\in\Gamma_{j,r}}(\mu_{\sigma}m^{-|\sigma|r})^{\frac{s_{r}}{s_{r}+r}}\leq\sum_{\sigma\in\Gamma_{j,r}}\nu_{2}(F_{\sigma})=1.
\end{eqnarray*}
By the definition of $\Gamma_{j,r}$, one gets
\begin{equation}
\pi_{r}^{-1}(j\underline{\eta}_{r}^{-1})^{\frac{s_{r}}{s_{r}+r}}\leq N_{j,r}\leq(j\underline{\eta}_{r}^{-2})^{\frac{s_{r}}{s_{r}+r}}.\label{cardestimate}
\end{equation}
Using this and (\ref{key2}), we deduce
\begin{eqnarray}
\xi_{j,r}: & = & N_{j,r}^{\frac{r}{s_{r}}}e_{N_{j,r},r}^{r}(\mu)\leq N_{j,r}^{\frac{r}{s_{r}}}\sum_{\sigma\in\Gamma_{j,r}}(\mu_{\sigma}m^{-|\sigma|r})\nonumber \\
 & \leq & N_{j,r}^{\frac{r}{s_{r}}}N_{j,r}\cdot(j^{-1}\underline{\eta}_{r})\leq\underline{\eta}_{r}^{-1}.\label{san6}
\end{eqnarray}
In a similar manner, by (\ref{key2}) and (\ref{cardestimate}), we
have
\begin{eqnarray}
\xi_{j,r} & \geq & DN_{j,r}^{\frac{r}{s_{r}}}\sum_{\sigma\in\Gamma_{j,r}}(\mu_{\sigma}m^{-|\sigma|r})\nonumber \\
 & \geq & DN_{j,r}^{\frac{r}{s_{r}}}N_{j,r}\cdot(j^{-1}\underline{\eta}_{r}^{2})\geq D\underline{\eta}_{r}\pi_{r}^{-(1+\frac{r}{s_{r}})}.\label{san6'}
\end{eqnarray}
Let $\overline{\eta}_{r}$ be as defined in (\ref{kz8}) and let $\Lambda(\sigma,h)$
be as defined in (\ref{san11}). Then, for all $j\geq(1-\overline{\eta}_{r})^{-1}-1=j_{0}$
and every $\sigma\in\Gamma_{j,r}$, we have,
\begin{eqnarray*}
\mu_{\omega}m^{-|\omega|r}\leq\overline{\eta}_{r}\mu_{\sigma}m^{-|\sigma|r}\leq(j+1)^{-1}\underline{\eta}_{r}\;\;{\rm for\; all}\;\;\omega\in\Lambda(\sigma,1)
\end{eqnarray*}
It follows that $N_{j,r}\leq N_{j+1,r}\leq(mn)N_{j,r}$. For
every $k\geq N_{j_{0},r}$, there is some $j\geq j_{0}$ such that
$N_{j,r}\leq k\leq N_{j+1,r}\leq(mn)N_{j,r}$. Hence, by Theorem
4.12 in \cite{GL:00}, we deduce
\begin{eqnarray*}
(mn)^{-\frac{1}{s_{r}}}N_{j+1,r}^{\frac{1}{s_{r}}}e_{\phi_{j+1},r}(\mu)\leq k^{\frac{1}{s_{r}}}e_{k,r}(\mu)\leq(mn)^{\frac{1}{s_{r}}}N_{j,r}^{\frac{1}{s_{r}}}e_{N_{j,r},r}(\mu).
\end{eqnarray*}
This, together with (\ref{san6}) and (\ref{san6'}), implies (\ref{main1}).
Let us remark that Theorem \ref{mthm1} (\ref{ConditionA}) improves
the result of \cite[Theorem 4.3]{Zhu:09}, where $(p_{ij})_{i\in G_{x,j}},j\in G_{y}$
are required to be permutations of one another.

\emph{ad (\ref{ConditionB}):} For $k\geq2$, we write $I_{k}:=\sum_{\omega\in\Omega_{k}}\mu_{\omega}\log\mu_{\omega}$.
Note that
\begin{eqnarray*}
\mu_{\omega}=\prod_{h=1}^{\ell(k)}p_{i_{h}j_{h}}\prod_{h=\ell(k)+1}^{k}q_{j_{h}}.
\end{eqnarray*}
for $\omega=((i_{1},j_{1}),\ldots,(i_{\ell(k)}j_{\ell(k)}),j_{\ell(k)+1},\ldots,j_{k})$.
We have
\begin{eqnarray*}
I_{k} & = & \ell(k)\sum_{(i,j)\in G}p_{ij}\log p_{ij}+(k-\ell(k))\sum_{j\in G_{y}}q_{j}\log q_{j},\\
I_{k+1} & = & \ell(k+1)\sum_{(i,j)\in G}p_{ij}\log p_{ij}+(k+1-\ell(k+1))\sum_{j\in G_{y}}q_{j}\log q_{j}.
\end{eqnarray*}
Hence, it follows that
\begin{equation}
I_{k+1}-I_{k}=\left\{ \begin{array}{ll}
\sum_{j\in G_{y}}q_{j}\log q_{j}\;\;\;\;\;\;\;\;\;\;\;{\rm if}\;\ell(k+1)=\ell(k)\\
\sum_{(i,j)\in G}p_{ij}\log p_{ij}\;\;\;\;\;\;{\rm if}\;\ell(k+1)=\ell(k)+1
\end{array}.\right.\label{ik}
\end{equation}
For $h\in\mathbb{N}$ and $\sigma\in\Omega_{k}$ with
\[
\sigma=((i_{1},j_{1}),\ldots,(i_{\ell(k)},j_{\ell(k)}),j_{\ell(k)+1},\ldots,j_{k}),
\]
let $\Lambda(\sigma,h)$ be as defined in (\ref{san11}). Next, with
the assumption in (\ref{ConditionB}),
we show
\begin{eqnarray}
\sum_{\omega\in\Lambda(\sigma,h)}\mu_{\omega}\log\mu_{\omega}=\mu_{\sigma}\log\mu_{\sigma}+\mu_{\sigma}(I_{k+h}-I_{k}).\label{san0}
\end{eqnarray}
First we show (\ref{san0}) for $h=1$. Note that $\sum_{\omega\in\Lambda(\sigma,1)}\mu_{\omega}=\mu_{\sigma}$.
We write
\begin{eqnarray*}
c(\sigma,1):=\sum_{\omega\in\Lambda(\sigma,1)}\mu_{\omega}\log\mu_{\omega}-\mu_{\sigma}\log\mu_{\sigma}=\mu_{\sigma}\sum_{\omega\in\Lambda(\sigma,1)}\frac{\mu_{\omega}}{\mu_{\sigma}}\log\frac{\mu_{\omega}}{\mu_{\sigma}}.
\end{eqnarray*}
If $\ell(k+1)=\ell(k)$, then, by (\ref{ik}) and (\ref{measratio}),
we have
\begin{equation}
c(\sigma,1)=\mu_{\sigma}\sum_{j\in G_{y}}q_{j}\log q_{j}=\mu_{\sigma}(I_{k+1}-I_{k}).\label{san1}
\end{equation}
If $\ell(k+1)=\ell(k)+1$, by (\ref{measratio}), we deduce
\begin{eqnarray*}
c(\sigma,1) & = & \mu_{\sigma}\sum_{i\in G_{x,j_{l_{k}+1}},j_{k+1}\in G_{y}}\frac{p_{ij_{\ell(k)+1}}q_{j_{k+1}}}{q_{j_{\ell(k)+1}}}\log\bigg(\frac{p_{ij_{\ell(k)+1}}q_{j_{k+1}}}{q_{j_{\ell(k)+1}}}\bigg)\\
 & = & \mu_{\sigma}\sum_{i\in G_{x,j_{l_{k}+1}},j_{k+1}\in G_{y}}\frac{p_{ij_{\ell(k)+1}}q_{j_{k+1}}}{q_{j_{\ell(k)+1}}}\log\frac{p_{ij_{\ell(k)+1}}}{q_{j_{\ell(k)+1}}}+\mu_{\sigma}\sum_{j\in G_{y}}q_{j}\log q_{j}\\
 & = & \mu_{\sigma}\sum_{i\in G_{x,j_{\ell(k)+1}}}\frac{p_{ij_{\ell(k)+1}}}{q_{j_{\ell(k)+1}}}\log\frac{p_{ij_{\ell(k)+1}}}{q_{j_{\ell(k)+1}}}+\mu_{\sigma}\sum_{j\in G_{y}}q_{j}\log q_{j}.
\end{eqnarray*}
By the hypothesis, $C_{j},j\in G_{y}$ are constant. Thus, in view
of (\ref{ik}), we have
\begin{eqnarray}
c(\sigma,1) & = & \mu_{\sigma}\sum_{j\in G_{y}}q_{j}\sum_{i\in G_{x,j}}\frac{p_{ij}}{q_{j}}\log\frac{p_{ij}}{q_{j}}+\mu_{\sigma}\sum_{j\in G_{y}}\sum_{i\in G_{x,j}}p_{ij}\log q_{j}\nonumber \\
 & = & \mu_{\sigma}\sum_{j\in G_{y}}\sum_{i\in G_{x,j}}p_{ij}\log\frac{p_{ij}}{q_{j}}+\mu_{\sigma}\sum_{j\in G_{y}}\sum_{i\in G_{x,j}}p_{ij}\log q_{j}\nonumber \\
 & = & \mu_{\sigma}\sum_{j\in G_{y}}\sum_{i\in G_{x,j}}p_{ij}\log p_{ij}=\mu_{\sigma}\sum_{(i,j)\in G}p_{ij}\log p_{ij}\nonumber \\
 & = & \mu_{\sigma}(I_{k+1}-I_{k}).\label{san2}
\end{eqnarray}
Combining (\ref{san1}) and (\ref{san2}), we conclude that, for $\sigma\in\Omega_{k}$,
\begin{eqnarray}
\sum_{\omega\in\Lambda(\sigma,1)}\mu_{\omega}\log\mu_{\omega} & = & \mu_{\sigma}\log\mu_{\sigma}+\mu_{\sigma}(I_{k+1}-I_{k}).\label{san3}
\end{eqnarray}
Assume that (\ref{san0}) holds for $h=p\in\mathbb{N}$. Next, we
show that it is true for $h=p+1$. Note that $\sum_{\omega\in\Lambda(\sigma,p)}\mu_{\omega}=\mu_{\sigma}$.
By (\ref{san3}), we deduce
\begin{eqnarray*}
\sum_{\tau\in\Lambda(\sigma,p+1)}\mu_{\tau}\log\mu_{\tau} & = & \sum_{\omega\in\Lambda(\sigma,p)}\sum_{\tau\in\Lambda(\omega,1)}\mu_{\tau}\log\mu_{\tau}\\
 & = & \sum_{\omega\in\Lambda(\sigma,p)}\big(\mu_{\omega}\log\mu_{\omega}+\mu_{\omega}(I_{k+p+1}-I_{k+p})\big)\\
 & = & \mu_{\sigma}\log\mu_{\sigma}+\mu_{\sigma}(I_{k+p}-I_{k})+\sum_{\omega\in\Lambda(\sigma,p)}\mu_{\omega}(I_{k+p+1}-I_{k+p})\\
 & = & \mu_{\sigma}\log\mu_{\sigma}+\mu_{\sigma}(I_{k+p+1}-I_{k}).
\end{eqnarray*}
Hence, by induction, (\ref{san0}) holds for all $h\in\mathbb{N}$.
Equivalently,
\[
\mu_{\sigma}(\log\mu_{\sigma}-I_{k})=\sum_{\omega\in\Lambda(\sigma,h)}\mu_{\omega}(\log\mu_{\omega}-I_{k+h}).
\]
In particular, for $h=k_{2j}-k$, we have
\[
\mu_{\sigma}(\log\mu_{\sigma}-I_{k})=\sum_{\omega\in\Lambda(\sigma,k_{2j}-k)}\mu_{\omega}(\log\mu_{\omega}-I_{k_{2j}}).
\]
By applying the preceding equation to all words $\sigma\in\Lambda_{j}$,
we obtain
\begin{eqnarray*}
\sum_{\sigma\in\Lambda_{j}}\mu_{\sigma}(\log\mu_{\sigma}-I_{|\sigma|}) & = & \sum_{\sigma\in\Lambda_{j}}\sum_{\omega\in\Lambda(\sigma,k_{2j}-k)}\mu_{\omega}(\log\mu_{\omega}-I_{k_{2j}})\\
 & = & \sum_{\omega\in\Omega_{k_{2j}}}\mu_{\omega}(\log\mu_{\omega}-I_{k_{2j}})=0.
\end{eqnarray*}
This implies that $\sum_{\sigma\in\Lambda_{j}}\mu_{\sigma}\log\mu_{\sigma}=\sum_{\sigma\in\Lambda_{j}}\mu_{\sigma}I_{|\sigma|}$.
On the other hand,
\begin{eqnarray*}
J_{k}:=\sum_{\sigma\in\Omega_{k}}\mu_{\sigma}\log m^{-k}=\log m^{-k},\;\sum_{\sigma\in\Lambda_{j}}\mu_{\sigma}\log m^{-|\sigma|}=\sum_{\sigma\in\Lambda_{j}}\mu_{\sigma}J_{|\sigma|}.
\end{eqnarray*}
As in \cite[Lemma 2.6]{Zhu:13}, there exist some integers $k_{j}^{(1)},k_{j}^{(2)}\in[k_{1j},k_{2j}]$
such that
\begin{eqnarray}
s_{k_{j}^{(1)},0}\leq t_{j} & = & \frac{\sum_{\sigma\in\Lambda_{j}}\mu_{\sigma}\log\mu_{\sigma}}{\sum_{\sigma\in\Lambda_{j}}\mu_{\sigma}\log m^{-|\sigma|}}=\frac{\sum_{\sigma\in\Lambda_{j}}\mu_{\sigma}I_{\sigma}}{\sum_{\sigma\in\Lambda_{j}}\mu_{\sigma}J_{\sigma}}\leq s_{k_{j}^{(2)},0}.\label{san5}
\end{eqnarray}
Let $Q_{j}$ and $\psi_{j}$ be as defined in (\ref{kz9}). By (\ref{san5}),
Lemmas \ref{pre2}, \ref{rem1}, we deduce
\begin{eqnarray}\label{pre3}
Q_{j} & \leq & s_{0}^{-1}\log\psi_{j}+\sum_{\sigma\in\Lambda_{j}}\mu_{\sigma}\log m^{-|\sigma|}+C_{5}\nonumber\\
 & = & s_{0}^{-1}\log\psi_{j}+t_{j}^{-1}\sum_{\sigma\in\Lambda_{j}}\mu_{\sigma}\log\mu_{\sigma}+C_{5}\nonumber \\
 & \leq & s_{0}^{-1}\log(j\eta_{0}^{-2})+t_{j}^{-1}\log(j^{-1}\eta_{0})+C_{5}\nonumber\\
 & \leq & (s_{0}^{-1}-t_{j}^{-1})\log j+C_{7},
\end{eqnarray}
where $C_{7}:=s_{0}^{-1}\log\eta_{0}^{-2}+C_{2}^{-1}\log\eta_{0}+C_{5}$.
On the other hand, we have
\begin{eqnarray*}
s_{k,0} & = & \frac{\ell(k)\sum_{(i,j)\in G}p_{ij}\log p_{ij}+(k-\ell(k)\sum_{j\in G_{y}}q_{j}\log q_{j}}{-k\log m}\\
 & = & \frac{k^{-1}\ell(k)\sum_{(i,j)\in G}p_{ij}\log p_{ij}+(1-k^{-1}\ell(k))\sum_{j\in G_{y}}q_{j}\log q_{j}}{-\log m}.
\end{eqnarray*}
Set $\chi:=(\log m)^{-1}\big(\sum_{(i,j)\in G}|p_{ij}\log p_{ij}|+\sum_{j\in G_{y}}|q_{j}\log q_{j}|\big)$.
Then
\begin{eqnarray}
\chi>0,\;\;|s_{k,0}-s_{0}|\leq k^{-1}\chi,\;\;{\rm implying}\;\;|s_{k,0}^{-1}-s_{0}^{-1}|\leq2\chi s_{0}^{-2}k^{-1}\label{pre4}
\end{eqnarray}
for all $k\geq2\chi s_{0}^{-1}$. Using (\ref{pre3}), (\ref{pre4})
and Lemma \ref{rem1} (\ref{lem:BulletB}), we deduce
\begin{eqnarray*}
Q_{j} & \leq & (s_{0}^{-1}-s_{k_{j}^{(1)},0}^{-1})\log j+C_{7}\\
 & \leq & 2\chi s_{0}^{-2}(k_{j}^{(1)})^{-1}\log j+C_{7}<2\chi s_{0}^{-2}C_{3}^{-1}+C_{7}.
\end{eqnarray*}
Hence, $\overline{Q}_{\flat}^{s_{0}}(\mu)<\infty$. By Lemma \ref{rem1}
(\ref{lem:BulletC}), this implies that $\overline{Q}_{0}^{s_{0}}(\mu)<\infty$.
One can show the inequality $\underline{Q}_{\flat}^{s_{0}}(\mu)>0$
in a similar manner.

\emph{ad (\ref{ConditionC}):} Assume that $q_{j}=q,j\in G_{y}$.
Let $\sigma\in\Omega_{k}$. First, we show that
\begin{eqnarray}
\mu_{\omega}=\mu_{\sigma},\;\mu_{\omega^{\flat}}=\mu_{\sigma^{\flat}},\;\;{\rm for\; all}\;\;\omega\in\Omega_{k}\;{\rm with}\;\;\sigma_{a}\prec\omega.\label{temp01}
\end{eqnarray}
In fact, one can easily see that $\mu_{\omega}=\mu_{\sigma}=p_{\sigma_{a}}q^{k-\ell(k)}$.
It remains to show that $\mu_{\omega^{\flat}}=\mu_{\sigma^{\flat}}$.
We write $\sigma_{a}=((i_{1},j_{1}),\ldots,(i_{\ell(k)},j_{\ell(k)}))$
and
\begin{eqnarray*}
\sigma & = & (\sigma_{a},j_{\ell(k+1)},\ldots,j_{k}),\;\omega=(\sigma_{a},\widetilde{j}_{\ell(k)+1},\ldots,\widetilde{j}_{k}).
\end{eqnarray*}
We have the following two cases:
\begin{itemize}
\item if $\ell(k)=\ell(k-1)$, then $\mu_{\omega^{\flat}}=\mu_{\sigma^{\flat}}=\mu_{\sigma}q^{-1}$,
since in this case we have:
\begin{eqnarray*}
\sigma^{\flat} & = & (\sigma_{a},j_{\ell(k)},\ldots,j_{k-1}),\;\omega^{\flat}=(\sigma_{a},\widetilde{j}_{\ell(k)+1},\ldots,\widetilde{j}_{k-1}).
\end{eqnarray*}

\item if $\ell(k)=\ell(k-1)+1$, by the assumption that $q_{j}=q,j\in G_{y}$, one gets
\begin{eqnarray*}
\mu_{\omega^{\flat}} & = & \mu_{\sigma^{\flat}}=q^{k-\ell(k)}p_{i_{1}j_{1}}\ldots p_{i_{\ell(k)-1}j_{\ell(k)-1}}.
\end{eqnarray*}
\end{itemize}
Next, we complete the proof for (\ref{ConditionC})
by distinguishing two cases.

\emph{Case 1:} $r>0$. For every $\omega\in\Omega_{k}$, we write
\begin{eqnarray*}
\Lambda_{C}(\omega):=\{\sigma\in\Omega_{k}:\;\omega_{a}\prec\sigma\}.
\end{eqnarray*}
Then $E\cap\bigcup_{\sigma\in\Lambda_{C}(\omega)}F_{\sigma}=E\cap E_{\omega_{a}}$.
By (\ref{temp01}), for every $\omega\in\Omega_{k}\cap\Gamma_{j,r}$,
we have $\Lambda_{C}(\omega)\subset\Gamma_{j,r}$. For each $\omega\in\Gamma_{j,r}$,
we take an arbitrary $\sigma\in\Lambda_{C}(\omega)$ and denote by
$\Gamma_{j,r}^{\flat}$ the set of such words $\sigma$. Then
\begin{eqnarray*}
\Gamma_{j,r} & = & \bigcup_{\sigma\in\Gamma_{j,r}^{\flat}}\Lambda_{C}(\sigma),\;\;\Lambda_{C}(\sigma^{(1)})\cap\Lambda_{C}(\sigma^{(2)})=\emptyset,\;\;\sigma^{(1)}\neq\sigma^{(2)}\in\Gamma_{j,r}^{\flat}.
\end{eqnarray*}
Moreover, as $\{F_{\sigma}\}_{\sigma\in\Gamma_{j,r}}$ is a cover
for $E$, we have $\sum_{\sigma\in\Gamma_{j,r}^{\flat}}\nu_{2}(E_{\sigma_{a}})=1$.
Let $\nu_{2}$ be as defined in the proof of (\ref{ConditionB}).
Then, by (\ref{maineq1}),
\[
P_{r}^{-1}Q_{r}=P_{r}^{-1}Q_{r}(P_{r}^{\theta}Q_{r}^{1-\theta})^{k}\leq P_{r}^{\ell(k)}Q_{r}^{k-\ell(k)}\leq(P_{r}^{\theta}Q_{r}^{1-\theta})^{k}=1.
\]
It follows that $P_{r}^{-1}Q_{r}P_{r}^{-\ell(k)}\leq Q_{r}^{k-\ell(k)}\leq P_{r}^{-\ell(k)}$
Note that
\[
\nu_{2}(E_{\sigma_{a}})=P_{r}^{-\ell(|\sigma|)}\big(p_{\sigma_{a}}m^{-\ell(|\sigma|)r}\big)^{\frac{s_{r}}{s_{r}+r}}.
\]
Hence, for each $\sigma\in\Gamma_{j,r}^{\flat}$, we get
\begin{eqnarray*}
\sum_{\omega\in\Lambda_{C}(\sigma)}(\mu_{\sigma}m^{-|\sigma|r})^{\frac{s_{r}}{s_{r}+r}} & = & \big(p_{\sigma_{a}}m^{-\ell(|\sigma|)r}\big)^{\frac{s_{r}}{s_{r}+r}}\bigg(\sum_{j\in G_{y}}(q_{j}m^{-r})^{\frac{s_{r}}{s_{r}+r}}\bigg)^{|\sigma|-\ell(|\sigma|)}\\
 & = & \big(p_{\sigma_{a}}m^{-\ell(|\sigma|)r}\big)^{\frac{s_{r}}{s_{r}+r}}Q_{r}^{|\sigma|-\ell(|\sigma|)}\left\{ \begin{array}{ll}
\leq\nu_{2}(E_{\sigma_{a}}),\\
\geq P_{r}^{-1}Q_{r}\nu_{2}(E_{\sigma_{a}}).
\end{array}\right.
\end{eqnarray*}
By the above analysis, we further deduce
\begin{eqnarray*}
\sum_{\sigma\in\Gamma_{j,r}}(\mu_{\sigma}m^{-|\sigma|r})^{\frac{s_{r}}{s_{r}+r}} & = & \sum_{\sigma\in\Gamma_{j,r}^{\flat}}\sum_{\omega\in\Lambda_{C}(\sigma)}(\mu_{\omega}m^{-|\omega|r})^{\frac{s_{r}}{s_{r}+r}}\\
 &  & \!\!\!\!\!\!\!\!\!\!\!\!\!\left\{ \begin{array}{ll}
\leq\,\,\,\sum_{\sigma\in\Gamma_{j,r}^{\flat}}\nu_{2}(E_{\sigma_{a}})=1,\\
\geq\,\,\, P_{r}^{-1}Q_{r}\sum_{\sigma\in\Gamma_{j,r}^{\flat}}\nu_{2}(E_{\sigma_{a}})=P_{r}^{-1}Q_{r}.
\end{array}\right.
\end{eqnarray*}
As we did in the proof of (\ref{ConditionA}), the preceding inequality
implies (\ref{main1}).

\emph{Case 2:} $r=0$. With the hypothesis of (\ref{ConditionC}),
the equation (\ref{san0}) typically does not hold. We will consider
cylinders instead of approximate squares. Set
\begin{eqnarray*}
e(\omega):=\sum_{\sigma\in\Lambda_{C}(\omega)}\mu_{\sigma}\log\mu_{\sigma},\;\omega\in\Omega^{*}.
\end{eqnarray*}
Then by (\ref{temp01}), for every $\omega\in\Lambda_{j}$, we have,
$\Lambda_{C}(\omega)\subset\Lambda_{j}$. For every $\sigma\in\Lambda_{j}$,
we take an arbitrary word $\omega\in\Lambda_{C}(\sigma)$ and denote
by $\Lambda_{j}^{\flat}$ the set of this words. Then $\Lambda_{j}=\bigcup_{\omega\in\Lambda_{j}^{\flat}}\Lambda_{C}(\omega)$.
Let $k\geq1$ and $\omega\in\Omega_{k}$. We need to show that
\begin{eqnarray}
e(\omega)=\sum_{\sigma\in\Lambda_{C}(\omega)}\sum_{\tau\in\Lambda(\sigma,h)}\mu_{\tau}\log\mu_{\tau}+\mu_{\sigma_{a}}(I_{k}-I_{k+h})\label{interimclaim}
\end{eqnarray}
for all $h\geq1$. We first prove (\ref{interimclaim}) for $h=1$. We again distinguish
two cases:

(d1): $\ell(k+1)=\ell(k)$. In this case, we have
\begin{eqnarray}
\sum_{\sigma\in\Lambda_{C}(\omega)}\sum_{\tau\in\Lambda(\sigma,1)}\mu_{\tau}\log\mu_{\tau} & = & \sum_{\sigma\in\Lambda_{C}(\omega)}\sum_{j\in G_{y}}(\mu_{\sigma}q_{j})\log(\mu_{\sigma}q_{j})\nonumber \\
 & = & \sum_{\sigma\in\Lambda_{C}(\omega)}\mu_{\sigma}\log\mu_{\sigma}+\sum_{\sigma\in\Lambda_{C}(\omega)}\mu_{\sigma}\sum_{j\in G_{y}}q_{j}\log q_{j}\nonumber \\
 & = & \sum_{\sigma\in\Lambda_{C}(\omega)}\mu_{\sigma}\log\mu_{\sigma}+\mu_{\sigma_{a}}\sum_{j\in G_{y}}q_{j}\log q_{j}\nonumber \\
 & = & e(\omega)+\mu_{\omega_{a}}(I_{k+1}-I_{k}).\label{use1}
\end{eqnarray}

(d2): $\ell(k+1)=\ell(k)+1$. Note that $E\cap\bigcup_{\sigma\in\Lambda_{C}(\omega)}F_{\sigma}=E_{\omega_{a}}\cap E$.
Thus,
\begin{eqnarray*}
e(\omega) & = & \sum_{(j_{\ell(k)+1},\ldots,j_{k})\in G_{y}^{k-l}}\prod_{h=\ell(k)+1}^{k}\mu_{\omega_{a}}q_{j_{h}}\log\bigg(\prod_{h=\ell(k)+1}^{k}\mu_{\omega_{a}}q_{j_{h}}\bigg)\\
 & = & \mu_{\omega_{a}}\log\mu_{\omega_{a}}+(k-\ell(k))\mu_{\omega_{a}}\sum_{j\in G_{y}}q_{j}\log q_{j}.
\end{eqnarray*}
Note that $\bigcup_{\sigma\in\Lambda_{C}(\omega)}\bigcup_{\tau\in\Lambda(\sigma,1)}F_{\tau}\cap E=E_{\omega_{a}}\cap E$.
We have
\begin{eqnarray*}
 &  & \sum_{\sigma\in\Lambda_{C}(\omega)}\sum_{\tau\in\Lambda(\sigma,1)}\mu_{\tau}\log\mu_{\tau}\\
 & = & \sum_{(i,j)\in G}\sum_{(j_{\ell(k)+2},\ldots,j_{k+1})\in G_{y}^{k-l}}\prod_{h=\ell(k)+2}^{k+1}\mu_{\omega_{a}}p_{ij}q_{j_{h}}\log\bigg(\prod_{h=\ell(k)+1}^{k+1}\mu_{\omega_{a}}p_{ij}q_{j_{h}}\bigg)\\
 & = & \mu_{\omega_{a}}\log\mu_{\omega_{a}}+\mu_{\omega_{a}}\sum_{(i,j)\in G}p_{ij}\log p_{ij}+(k-\ell(k))\mu_{\omega_{a}}\sum_{j\in G_{y}}q_{j}\log q_{j}.
\end{eqnarray*}
Hence, combining the above analysis, we obtain
\begin{eqnarray}
\sum_{\sigma\in\Lambda_{C}(\omega)}\sum_{\tau\in\Lambda(\sigma,1)}\mu_{\tau}\log\mu_{\tau} & = & e(\omega)+\mu_{\omega_{a}}\sum_{(i,j)\in G}p_{ij}\log p_{ij}\nonumber \\
 & = & e(\omega)+\mu_{\omega_{a}}(I_{k+1}-I_{k}).\label{use2}
\end{eqnarray}
Combining (\ref{use1}) and (\ref{use2}), for $k\geq1$ and all $\omega\in\Omega_{k}$,
we have
\begin{eqnarray}
e(\omega) & = & \sum_{\sigma\in\Lambda_{C}(\omega)}\sum_{\tau\in\Lambda(\sigma,1)}\mu_{\tau}\log\mu_{\tau}+\mu_{\sigma_{a}}(I_{k}-I_{k+1}).\label{san4}
\end{eqnarray}
Assume that (\ref{interimclaim}) holds for $h=p\in\mathbb{N}$, by
(\ref{san4}) and mathematical induction, one can show that $(\ref{interimclaim})$
holds for all $h\in\mathbb{N}$, which is equivalent to
\[
\sum_{\sigma\in\Lambda_{C}(\omega)}\mu_{\sigma}(\log\mu_{\sigma}-I_{k})=\sum_{\sigma\in\Lambda_{C}(\omega)}\sum_{\tau\in\Lambda(\sigma,h)}\mu_{\tau}(\log\mu_{\tau}-I_{k+h}).
\]
Applying the preceding equation to every $\sigma\in\Lambda_{j}$ with
$h=k_{2j}-|\sigma|$, one gets
\begin{eqnarray*}
\sum_{\sigma\in\Lambda_{j}}\mu_{\sigma}(\log\mu_{\sigma}-I_{|\sigma|}) & = & \sum_{\omega\in\Lambda_{j}^{\flat}}\sum_{\sigma\in\Lambda_{C}(\omega)}\mu_{\sigma}(\log\mu_{\sigma}-I_{|\sigma|})\\
 & = & \sum_{\omega\in\Lambda_{j}^{\flat}}\bigg(\sum_{\sigma\in\Lambda_{C}(\omega)}\sum_{\tau\in\Lambda(\sigma,k_{2j}-|\sigma|)}\mu_{\tau}\log\mu_{\tau}-\mu_{\omega_{a}}I_{k_{2j}}\bigg)\\
 & = & \sum_{\tau\in\Omega_{k_{2j}}}\mu_{\tau}\log\mu_{\tau}-I_{k_{2j}}=0
\end{eqnarray*}
It follows that $\sum_{\sigma\in\Lambda_{j}}\mu_{\sigma}\log\mu_{\sigma}=\sum_{\sigma\in\Lambda_{j}}\mu_{\sigma}I_{|\sigma|}$.
Hence, we obtain
\begin{eqnarray*}
t_{j} & = & \frac{\sum_{\sigma\in\Lambda_{j}}\mu_{\sigma}\log\mu_{\sigma}}{\sum_{\sigma\in\Lambda_{j}}\mu_{\sigma}\log m^{-|\sigma|}}=\frac{\sum_{\sigma\in\Lambda_{j}}\mu_{\sigma}I_{|\sigma|}}{\sum_{\sigma\in\Lambda_{j}}\mu_{\sigma}\log m^{-|\sigma|}}
\end{eqnarray*}
Thus, (\ref{san5}) holds and (\ref{main1}) follows by the last part
of the proof for (\ref{ConditionB}).
This finishes the proof of the main theorem.

\subsection{Concluding remarks and examples}

As an immediate consequence of Theorem \ref{mthm1} (\ref{ConditionB})
and (\ref{ConditionC}), we have
\begin{corollary}
\label{cor:Permutation}Assume that, if the vectors $(p_{ij}/q_{j})_{j\in G_{x,j}},j\in G_{y}$
are permutations of one another. Then (\ref{main1}) holds for all
$r\geq0$.
\end{corollary}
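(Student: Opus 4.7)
The plan is to verify that the permutation hypothesis immediately forces both condition \eqref{ConditionA} (for $r>0$) and condition \eqref{ConditionB} (for $r=0$) of Theorem \ref{mthm1}, after which the conclusion \eqref{main1} follows directly by invoking Theorem \ref{mthm1}.

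The key algebraic observation is that the quantities $C_{j,r}$ and $C_{j}$ defined in Theorem \ref{mthm1} depend on the index $j$ only through the vector $(p_{ij}/q_{j})_{i\in G_{x,j}}$. Indeed, factoring $p_{ij}=q_{j}\cdot (p_{ij}/q_{j})$, I would rewrite
\[
C_{j,r}=q_{j}^{-\frac{s_{r}}{s_{r}+r}}\sum_{i\in G_{x,j}}p_{ij}^{\frac{s_{r}}{s_{r}+r}}=\sum_{i\in G_{x,j}}\bigl(p_{ij}/q_{j}\bigr)^{\frac{s_{r}}{s_{r}+r}},
\]
and similarly
\[
C_{j}=q_{j}^{-1}\sum_{i\in G_{x,j}}p_{ij}\log(p_{ij}/q_{j})=\sum_{i\in G_{x,j}}\bigl(p_{ij}/q_{j}\bigr)\log(p_{ij}/q_{j}).
\]

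Both right-hand sides are symmetric functions of the entries of the vector $(p_{ij}/q_{j})_{i\in G_{x,j}}$, hence they are invariant under any permutation of these entries. Consequently, if these vectors are permutations of one another as $j$ ranges over $G_{y}$, then $C_{j,r}$ is independent of $j$ for every $r>0$, and $C_{j}$ is independent of $j$. Thus the permutation hypothesis implies condition \eqref{ConditionA} for all $r>0$ and condition \eqref{ConditionB} for $r=0$ simultaneously, and \eqref{main1} then holds across the whole range $r\geq 0$ by applying Theorem \ref{mthm1}.

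There is essentially no obstacle here: the corollary is a direct unification of cases \eqref{ConditionA} and \eqref{ConditionB}, and the only thing to write down is the rewriting of $C_{j,r}$ and $C_{j}$ as symmetric functions of $(p_{ij}/q_{j})_{i\in G_{x,j}}$. The only mild subtlety worth noting is that the statement covers $r=0$ as well, so one must check the logarithmic expression $C_{j}$ separately from the power-sum expression $C_{j,r}$; but both reductions are immediate from the factorization $p_{ij}=q_{j}(p_{ij}/q_{j})$.
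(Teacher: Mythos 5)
Your proof is correct. Writing $C_{j,r}=\sum_{i\in G_{x,j}}\left(p_{ij}/q_{j}\right)^{\frac{s_{r}}{s_{r}+r}}$ and $C_{j}=\sum_{i\in G_{x,j}}\left(p_{ij}/q_{j}\right)\log\left(p_{ij}/q_{j}\right)$ exhibits both as symmetric functions of the vector $\left(p_{ij}/q_{j}\right)_{i\in G_{x,j}}$, so the permutation hypothesis makes condition (\ref{ConditionA}) hold for every $r>0$ and condition (\ref{ConditionB}) hold for $r=0$, and (\ref{main1}) follows from Theorem \ref{mthm1} for all $r\geq0$. One remark on the route: the paper announces the corollary as a consequence of conditions (\ref{ConditionB}) and (\ref{ConditionC}), whereas you invoke (\ref{ConditionA}) and (\ref{ConditionB}); your choice is the right one, since the permutation hypothesis on the normalized vectors does not force the $q_{j}$ to coincide (summing the entries of $\left(p_{ij}/q_{j}\right)_{i\in G_{x,j}}$ always gives $1$, so no constraint on $q_{j}$ is obtained), and hence condition (\ref{ConditionC}) need not hold under the stated assumption. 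The paper's citation of (\ref{ConditionC}) appears to be a slip, and your derivation is the clean, complete version of the intended argument.
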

Next, we construct an example to illustrate Theorem \ref{mthm1}.
\begin{example}{\rm
Let $n=9,m=3$. We consider the functions
\begin{eqnarray*}
g_{1}(x):=\sqrt{x}+\sqrt{\frac{3}{8}-x},\; x\in[0,\frac{3}{8}],\\
g_{2}(x):=\sqrt{x}+\sqrt{\frac{7}{16}-x},\; x\in[0,\frac{7}{16}].
\end{eqnarray*}
Then $g_{1},g_{2}$ are both continuous. Note that
\begin{eqnarray*}
 &  & g_{1}(0)=\frac{\sqrt{6}}{4}<\frac{\sqrt{2}}{2},\; g_{1}\left(\frac{3}{16}\right)=\frac{\sqrt{3}}{2}>\frac{\sqrt{2}}{2};\\
 &  & g_{2}(0)=\frac{\sqrt{7}}{4}<\frac{\sqrt{2}}{2},\; g_{1}\left(\frac{7}{32}\right)=\frac{\sqrt{14}}{4}>\frac{\sqrt{2}}{2}.
\end{eqnarray*}
The continuity of $g_{1}$ allows us to choose a real number $x_{1}\in(0,\frac{3}{8})$
satisfying
\[
\sqrt{\frac{1}{8}}+\sqrt{x_{1}}+\sqrt{\frac{3}{8}-x_{1}}=\frac{3\sqrt{2}}{4}.
\]
In a similar manner, one can find a number $x_{2}\in(0,\frac{7}{16})$
such that
\begin{equation}
\sqrt{x_{2}}+\sqrt{\frac{7}{16}-x_{2}}=\frac{\sqrt{2}}{2}.\label{x1}
\end{equation}
Let $G:=\{(1,0),(3,0),(5,0),(1,2),(3,2),(5,2),(7,2)\}$. We set
\begin{eqnarray}
 &  & p_{10}=\frac{1}{8},\; p_{30}=x_{1},\; p_{50}=\frac{3}{8}-x_{1};\nonumber \\
 &  & p_{12}=x_{2},p_{32}=\frac{7}{16}-x_{2},\; p_{52}=\frac{1}{32},\; p_{72}=\frac{1}{32}.\label{x2}
\end{eqnarray}
Then one can easily see that $q_{0}=q_{2}=\frac{1}{2}$. Let $\mu$
be the self-affine measure associated with $G$ and $\left(p_{ij}\right)_{(i,j)\in G}$.
In view of (\ref{x1}) and (\ref{x2}), for $r=1$,
\begin{eqnarray}
C_{2,r}=\sum_{i\in G_{x,2}}\bigg(\frac{p_{i2}}{q_{2}}\bigg)^{\frac{1}{2}}=C_{0,r}=\sum_{i\in G_{x,0}}\bigg(\frac{p_{i0}}{q_{0}}\bigg)^{\frac{1}{2}}=\sqrt{2}\sum_{i\in G_{x,0}}(p_{i0})^{\frac{1}{2}}=\frac{3}{2}.\label{hyporeal}
\end{eqnarray}
So, for $r=1$ and $G_{y}=\{0,2\},G_{x,0}=\{1,3,5\}$ and $G_{x,2}=\{1,3,5,7\}$,
we have
\begin{eqnarray*}
\sum_{j\in G_{y}}(q_{j}m^{-r})^{\frac{1}{r+1}}\bigg(\sum_{i\in G_{x,j}}\bigg(\frac{p_{ij}}{q_{j}}\bigg)^{\frac{1}{r+1}}\bigg)^{\log_{9}^{3}}=2\bigg(\frac{\sqrt{6}}{6}\cdot\frac{\sqrt{6}}{2}\bigg)=1.
\end{eqnarray*}
Hence, for $r=1$, we have that $s_{r}=t_{r}=1$. By (\ref{hyporeal}),
the hypothesis in Theorem \ref{mthm1} (\ref{ConditionA}) is satisfied.
Thus, we conclude that (\ref{main1}) holds for $r=1$. Finally,
since $q_{0}=q_{2}=2^{-1}$, by Theorem \ref{mthm1} (\ref{ConditionC}),
(\ref{main1}) holds for all $r\geq0$.
}\end{example}

\noindent{\bf Acknowledgement} SZ is supported by the \emph{Chinese Scholarship Council} - File
No. 201308320049.

\end{document}